\documentclass[11pt,leqno]{amsart}

\usepackage[english]{babel}
\usepackage{url}

\usepackage[all,2cell,emtex]{xy}
\usepackage{amssymb,amsmath,bbm,xr,color,stmaryrd}
\usepackage[mathscr]{euscript}
\usepackage{mathrsfs}
\usepackage{tikz-cd}

\usepackage[colorlinks=true]{hyperref} 

%
%


\DeclareMathOperator{\uK}{\underline K}

\DeclareMathOperator{\uPic}{\underline{Pic}}

\DeclareMathOperator{\rk}{rk}

\DeclareMathOperator{\GW}{GW}

\DeclareMathOperator{\hocolim}{hocolim}

\newcommand{\E}{\mathbb E}

\newcommand{\un}{\mathbbm 1}

\newcommand{\KQ}{\mathrm{KQ}}
\newcommand{\KW}{\mathrm{KW}}
\newcommand{\KGL}{\mathrm{KGL}}


\newcommand{\SL}{\mathrm{SL}}


\newcommand{\pur}{\mathfrak p}

\DeclareMathOperator{\ch}{ch}
\DeclareMathOperator{\borel}{bo}

\newcommand{\vb}[1]{\langle#1\rangle}


\DeclareMathOperator{\SH}{SH}

\DeclareMathOperator{\Hom}{Hom}

\DeclareMathOperator{\uHom}{\underline{Hom}} 

\DeclareMathOperator{\CH}{\mathrm{CH}}

\DeclareMathOperator{\spec}{Spec}

\DeclareMathOperator{\Th}{\mathrm{Th}}  



\newcommand{\ZZ} {\mathbb Z}
\newcommand{\QQ} {\mathbb Q}


\renewcommand{\AA} {\mathbb A}

\newcommand{\GG} {\mathbb{G}_m}


\title{Borel isomorphism and absolute purity}

\author{Fr\'ed\'eric D\'eglise}
\address{Institut Math\'ematique de Bourgogne - UMR 5584, Universit\'e de Bourgogne, 9 avenue Alain Savary, BP 47870, 21078 Dijon Cedex, France}
\email{frederic.deglise@ens-lyon.fr}
\urladdr{http://perso.ens-lyon.fr/frederic.deglise/}
 
\author{Jean Fasel}
\address{Institut Fourier - UMR 5582, Universit\'e Grenoble-Alpes, CS 40700, 38058 Grenoble Cedex 9, France}
\email{jean.fasel@gmail.com}
\urladdr{https://www.uni-due.de/~adc301m/staff.uni-duisburg-essen.de/Home.html}

\author{Adeel Khan}
\address{Fakult\"at f\"ur Mathematik, Universit\"at Regensburg, 93040 Regensburg, Germany}
\email{adeel.khan@mathematik.uni-regensburg.de}
\urladdr{https://www.preschema.com}

\author{Fangzhou Jin}
\address{Fakult\"at f\"ur Mathematik,  
Universit\"at Duisburg-Essen, Campus Essen,
Thea-Leymann-Strasse 9,
45127 Essen}
\email{fangzhou.jin@uni-due.de}
\urladdr{https://sites.google.com/site/fangzhoujin1/}

\date{\today}

\newtheorem{thm}{Theorem}[section]
\newtheorem{prop}[thm]{Proposition}
\newtheorem{lm}[thm]{Lemma}
\newtheorem{cor}[thm]{Corollary}

\theoremstyle{remark} 
\newtheorem{rem}[thm]{Remark}

\theoremstyle{definition} 
\newtheorem{df}[thm]{Definition}
\newtheorem{num}[thm]{}

\numberwithin{equation}{thm}

\newtheorem{thm*}{Theorem}


\begin{document}

\begin{abstract}
We prove absolute purity for the rational motivic sphere spectrum.
The main ingredient is the construction of an analogue of the Chern character, where algebraic K-theory is replaced by hermitian K-theory, and motivic cohomology by the plus and minus parts of the rational sphere spectrum.
Another ingredient is absolute purity for hermitian K-theory.
\end{abstract}

\maketitle

\setcounter{tocdepth}{3}
\tableofcontents

\subsection*{Introduction}

The \emph{absolute purity conjecture}, stated for \'etale torsion sheaves
 and by extension for $\ell$-adic sheaves, has been a difficult problem
 since its formulation by Grothendieck in the mid-sixties (published in 1977
 \cite{SGA5}). For some time, only the case of one-dimensional regular schemes was
 known thanks to Deligne, until Thomason first solved the case of $\ell$-adic sheaves
 (\cite{Thomason1}).
 His proof was later extended by Gabber to the general case (see \cite{Fuji}).
 An ultimate proof was found by Gabber, using a refinement of De Jong resolution
 of singularities, published in \cite[Exp.~XVI]{Gabber}.

The importance of this conjecture stands from its applications. First, it allows to show
 that constructibility (of complexes of \'etale sheaves) is stable under the direct
 image functor $f_*$ (for $f$ of finite type between quasi-excellent schemes).
 One deduces that constructibility is stable under the six operations (under very general
 assumptions). Then one obtains the so-called Grothendieck--Verdier duality
 for constructible complexes over schemes $S$ with a dimension function.\footnote{Most
 notably,
 the existence of a dualizing complex $\mathcal D_S$ over $S$ such that
 $D_S=\uHom(-,\mathcal D_S)$ is an auto-equivalence of categories. The functor $D_S$
 then transforms $f_*$ (resp. $f^*$) into $f_!$ (resp. $f^!$).} This last point implies
 the existence of the (self-dual) perverse $t$-structure over suitable base schemes,
 extending the fundamental work of \cite{BBD} (see \cite{Gabber}).

For \emph{triangulated mixed motives}, modeled on the previous \'etale setting by
 Beilinson, this conjecture was implicit in the expected property.
 It was first formulated and proved in the rational case by Cisinski and the first-named
 author in \cite{CD3}. Later the \emph{absolute purity property} was explicitly
 highlighted in \cite[Appendix]{CD5}, and proven for integral \'etale motives
 in \cite{AyoubEt} and \cite{CD5}. It became apparent that this important property
 should hold in greater generality, and philosophically be an addition to the
 six functors formalism. Thus, it was conjectured in \cite{Deg12} that this property should hold for the algebraic cobordism spectrum and the sphere spectrum
 of Morel and Voevodsky's motivic homotopy theory.
 Up to this point, the main evidence for this conjecture was the example of the algebraic K-theory spectrum, for which absolute purity was proven in \cite[Thm.~13.6.3]{CD3}.

\bigskip

The aim of this note is to prove the absolute purity conjecture for
 the rational motivic sphere spectrum. Recall that the latter is known to split into ``plus'' and ``minus'' parts \cite[Sect.~16.2]{CD3}. It was established in \emph{loc. cit.} that the
 plus part agrees with the rational motivic cohomology spectrum.
 Moreover, the latter was proven to satisfy the absolute purity property, by using the Chern character to reduce to the case of the algebraic K-theory spectrum $\KGL$.
 Indeed, absolute purity for the latter is a consequence of Quillen's d\'evissage theorem, in the form of an equivalence $K(X~\mathrm{on}~Z) \simeq K(Z)$ for a closed immersion of regular schemes $i : Z \to X$.
 To prove absolute purity for $\un_\QQ$ itself, we employ a similar strategy, replacing Quillen's algebraic K-theory by hermitian K-theory.
 That the latter satisfies absolute purity is a consequence of the foundational work of Karoubi, Hornbostel and Schlichting, through which hermitian K-theory is known to have all the expected properties, especially the analogue of Quillen's d\'evissage. That being given, the crucial new ingredient
 in our proof is the construction of an appropriate analogue of the Chern character for
 hermitian K-theory over nice base schemes (defined over $\ZZ[1/2]$).
 We call it the \emph{Borel character}.
 Its formulation allows the computation of the rational hermitian K-theory spectrum
 $\KQ_\QQ$ in terms of the plus and minus parts of the rational sphere spectrum
 $\QQ_{S+}$ and $\QQ_{S-}$, which play the role of the rational motivic cohomology spectrum.
 With these notations, the Borel character is an isomorphism of the ring spectra:
$$
\borel:\KQ_\QQ\to \bigoplus_{m \in \ZZ} \QQ_{S+}(2m)[4m]
 \oplus \bigoplus_{m \in \ZZ} \QQ_{S-}(4m)[8m]
$$
(see Definition \ref{df:Borel}).
 The absolute purity conjecture for the rational sphere spectrum is then deduced
 from the analogous property established for hermitian K-theory:
 see Theorem~\ref{thm:abs_purity} and Corollary~\ref{cor:absolute purity examples}. An interesting application
 is the existence of a well-defined product for rational Chow--Witt groups
 of a regular base $\ZZ[\frac 12]$-scheme.

 Our construction of the Borel character uses in an essential way previous works of Ananyevskiy
 \cite{Anan1}
 and Ananyevskiy, Levine, Panin \cite{ALP}. The Borel character will be studied
 further in \cite{DFJK1}, where an explicit formula in terms of characteristic classes will be given over a base field.

A very noticeable consequence of the Borel isomorphism
 is that every rational motivic spectrum, over general base schemes, is \emph{Sp-oriented} in the
 sense of Panin and Walter \cite{PaninWalter1}. This is the
 $\AA^1$-homotopy analogue of the well-known fact that every rational spectrum in topology
 is oriented. Note that in $\AA^1$-homotopy, there exist rational ring spectra
 that are non-orientable in the classical sense (say, which do not admit Chern classes): for example, consider Chow--Witt groups and hermitian K-theory, rationally, over fields with non-trivial Grothendieck--Witt groups.

\subsection*{Organization of the paper} The paper is divided into three sections.
 In Section 1, we give some quick reminders on some  ring spectra, such as periodicity
 and representability of hermitian K-theory and Balmer's higher Witt groups
 (for regular schemes). In Section 2, we construct the Borel isomorphism
 and deduce that every rational spectrum is Sp-orientable.
 In Section 3, we establish the absolute purity of the rational sphere spectrum
 and draw some consequences.

\subsection*{Conventions}

All schemes are noetherian and finite dimensional, admit an ample family of line bundles\footnote{This assumption allows us to use Schlichting's results in \cite{Schlicht1}.  It could be avoided by using perfect complexes instead of strictly perfect complexes in \emph{op. cit}.},
 and are defined over $\ZZ[1/2]$.

\bigskip

\noindent \textbf{Acknowledgments.}
F. D\'eglise received support from the French ``Investissements d'Avenir'' program, project ISITE-BFC (contract ANR-lS-IDEX-OOOB).

\section{Basics on hermitian K-theory and higher Witt groups}

\begin{df}
Let $S$ be a scheme as in our conventions.
We will denote by $\KQ_S$ the motivic ring spectrum representing hermitian K-theory
 over $S$, denoted by $\mathbf{BO}$ in \cite{PaninWalter1}.
\end{df}
We need the following properties:
\begin{itemize}
\item[(GW1)] 
(\cite[Th. 1.2]{PaninWalter1}) 
Given any map $f:T \rightarrow S$, there is a canonical identification $f^*\KQ_S=\KQ_T$. In other words, $\KQ$
 is an absolute ring spectrum.
 This follows from the geometric model of hermitian K-theory using quaternionic
 Grassmanians.
\item[(GW2)]
(\cite[Cor.~7.3]{PaninWalter1})
For any regular scheme $S$ and any closed immersion $i : Z \to S$, there are isomorphisms:
\begin{equation}\label{eq:schlicht}
\KQ^{n,i}(S/S-Z)=\GW_{2i-n}^{[i]}(S \text{ on } Z)
\end{equation}
for all pairs $(n,i)\in\ZZ\times\ZZ$.
Here the right-hand side is Schlichting's higher Grothendieck--Witt groups:
 that is, the $(2i-n)$-th homotopy group of the spectrum
 $\mathbb GW^{[i]}(\mathcal A_S \text{ on } Z,\mathcal O_S)$ of \cite[Def. 8 of Section 10]{Schlicht1}.
 In the following, we will simply denote this spectrum by $\mathbb GW^{[i]}(S \text{ on } Z)$.
\end{itemize}

\begin{rem}
Under the twisting notation introduced for example in \cite{DJK},
 one can rewrite \eqref{eq:schlicht} as: $\KQ^{n}(S/S-Z, \vb{i})=\GW_{n}^{[i]}(S~\text{on}~Z)$.
\end{rem}

The following result is well-known (see e.g. \cite{GepSna}):
\begin{prop}
Let $\E$ be a motivic ring spectrum over $S$.
 Consider a pair of integers $(n,i) \in \ZZ^2$.
 Then the following conditions are equivalent:
\begin{enumerate}
\item There exists an element $\rho \in \E_{n,i}(S)$, invertible for the cup-product
 on $\E_{**}$.
\item There exists an isomorphism: $\tilde \rho:\E(i)[n] \rightarrow \E$.
\end{enumerate}
\end{prop}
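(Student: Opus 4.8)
The plan is to prove the two implications separately, the key mechanism being that an element of $\E_{n,i}(S)$ is by definition (up to adjunction) a map $\un(i)[n] \to \E$ of spectra over $S$, or equivalently, tensoring with $\E$, a map $\E(i)[n] \to \E$ of $\E$-modules.

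\textbf{From (1) to (2).} First I would take $\rho \in \E_{n,i}(S) = \pi_{n,i}\E$, viewed as a morphism $\un(i)[n] \to \E$ in $\SH(S)$. Smashing with the ring spectrum $\E$ and composing with the multiplication $\E \wedge \E \to \E$ yields a morphism of $\E$-modules $\tilde\rho : \E(i)[n] \to \E$; concretely, on bigraded homotopy this is cup-product with $\rho$. To see $\tilde\rho$ is an isomorphism, I would construct an inverse from the hypothetical inverse $\rho^{-1} \in \E_{-n,-i}(S)$ (the element with $\rho \cdot \rho^{-1} = 1 \in \E_{0,0}(S)$ guaranteed by invertibility for the cup-product on $\E_{**}$): it induces $\widetilde{\rho^{-1}} : \E(-i)[-n] \to \E$, equivalently $\E \to \E(i)[n]$ after the evident shift/twist, and the composites $\tilde\rho \circ \widetilde{\rho^{-1}}$ and $\widetilde{\rho^{-1}} \circ \tilde\rho$ are, by associativity of the $\E$-module structure and unitality, cup-product with $\rho \cdot \rho^{-1} = 1$ and $\rho^{-1}\cdot \rho = 1$ respectively, hence identities. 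This uses that $\E_{**}$ is (graded-)commutative so that left and right multiplication agree; if one does not want to assume commutativity one should check the two-sided inverse condition, but the stated hypothesis that $\rho$ is invertible for the cup-product already supplies this.

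\textbf{From (2) to (1).} Conversely, given an isomorphism $\tilde\rho : \E(i)[n] \to \E$ of motivic ring spectra (or even just of $\E$-modules, which suffices), I would set $\rho := \tilde\rho(\mathbf{1}_{(i)[n]})$, the image of the unit $\un(i)[n] \to \E(i)[n]$; equivalently $\rho$ is the element of $\pi_{n,i}\E = \E_{n,i}(S)$ classifying the composite $\un(i)[n] \to \E(i)[n] \xrightarrow{\tilde\rho} \E$. Because $\tilde\rho$ is $\E$-linear, cup-product with $\rho$ recovers $\tilde\rho$ (both are $\E$-linear maps agreeing on the unit), so $\cup\rho$ is an isomorphism $\E_{**} \to \E_{*+n,*+i}$ of $\E_{**}$-modules; applying it to $1 \in \E_{0,0}(S)$ we already have $\rho$ in the image, and applying the inverse to $1$ produces an element $\rho' \in \E_{-n,-i}(S)$ with $\rho'\cup\rho = 1$, whence $\rho$ is invertible.

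\textbf{Expected main obstacle.} The routine but slightly delicate point is the precise passage between the three descriptions of the same datum — an element of bigraded homotopy, a map $\un(i)[n] \to \E$ in $\SH(S)$, and an $\E$-linear map $\E(i)[n] \to \E$ — and checking that ``multiplication by $\rho$'' is compatible under all of them, including that an $\E$-module map out of $\E(i)[n]$ is determined by where it sends the unit. None of this is hard, and indeed the result is stated as well-known with a reference to \cite{GepSna}; the only genuine subtlety is whether one needs (graded-)commutativity of $\E_{**}$, which is sidestepped here because the hypothesis in (1) explicitly asks for an element invertible for the cup-product, i.e.\ a two-sided unit.
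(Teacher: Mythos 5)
The paper offers no proof of this proposition at all (it is quoted as well-known, with a pointer to the Gepner--Snaith reference), so there is no argument to compare yours against step by step; your writeup supplies the standard one. The direction (1)$\Rightarrow$(2) is correct as you give it: extend $\rho$ to the $\E$-linear map $\tilde\rho=\mu\circ(\mathrm{id}_\E\wedge\rho)$, do the same with a two-sided inverse $\rho^{-1}$, and use homotopy associativity and unitality to identify the two composites with multiplication by $1$; your observation that asking for a two-sided inverse sidesteps commutativity questions is apt in the motivic setting, where $\E_{**}$ is only $\epsilon$-graded-commutative. Likewise the bookkeeping you flag as the "main obstacle" (element of $\E_{n,i}(S)$ versus map $\un(i)[n]\to\E$ versus $\E$-linear map $\E(i)[n]\to\E$, and the fact that an $\E$-linear map out of the free rank-one module is determined in the homotopy category by its value on the unit) is handled correctly.

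The point that deserves care is (2)$\Rightarrow$(1). Condition (2) as stated asks only for an isomorphism $\E(i)[n]\to\E$ in $\SH(S)$, with no compatibility with the multiplication, whereas your argument ("cup-product with $\rho$ recovers $\tilde\rho$, both being $\E$-linear maps agreeing on the unit") needs $\tilde\rho$ to be $\E$-linear; your parenthetical "or even just of $\E$-modules, which suffices" is therefore a strengthening of the stated hypothesis, not a weakening. For a bare isomorphism of spectra the implication is genuinely delicate: already in topology a trivial square-zero extension of $H\mathbb{F}_2$ by $\bigvee_{n\neq 0}\Sigma^{n}H\mathbb{F}_2$ is abstractly equivalent to every shift of itself, yet its homotopy ring has no homogeneous units outside degree zero. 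So you should either read (2) as an isomorphism of $\E$-modules --- which is certainly the intended meaning, and is how the property is produced and used in the paper, where the $(8,4)$-periodicity of $\KQ$ comes from a multiplicative periodicity isomorphism of Grothendieck--Witt spectra and one then sets $\rho_S=\psi_S(1)$ --- or supply an additional argument for the unstructured statement. With that reading made explicit, your proof is complete; without it, this one step is a gap relative to the literal wording.
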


\begin{df}
A pair $(\E,\rho)$ satisfying the equivalent conditions of the above proposition will be called an \emph{$(n,i)$-periodic ring spectrum} over $S$.

An \emph{absolute} $(n,i)$-periodic ring spectrum is an $(n,i)$-periodic ring spectrum over $\spec(\ZZ[\frac 12])$.\footnote{The restriction to $\spec{\ZZ[\frac 12]}$ comes
 from the global conventions of our paper. Needless to say the definition makes
 sense over $\spec{\ZZ}$.}
\end{df}

\begin{prop}
There exists a family of elements $\rho_S \in \KQ^{8,4}(S)$ indexed by schemes,
 stable under pullback, such that $(\KQ_S,\rho_S)$ is $(8,4)$-periodic.
\end{prop}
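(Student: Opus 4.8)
The plan is to construct the periodicity element over the absolute base $\spec(\ZZ[\tfrac12])$ and propagate it by pullback using absoluteness. Write $f_S\colon S\to\spec(\ZZ[\tfrac12])$ for the structure map. By (GW1) the identification $f_S^*\KQ_{\ZZ[1/2]}=\KQ_S$ is one of motivic ring spectra, so $f_S^*\colon\KQ^{**}(\spec(\ZZ[\tfrac12]))\to\KQ^{**}(S)$ is a homomorphism of bigraded rings; hence if $\rho\in\KQ^{8,4}(\spec(\ZZ[\tfrac12]))$ is invertible for the cup product then so is $\rho_S:=f_S^*\rho$ for every $S$ in our conventions. The family $(\rho_S)_S$ is tautologically stable under pullback: for $g\colon T\to S$ one has $g^*\rho_S=g^*f_S^*\rho=f_T^*\rho=\rho_T$, since $\spec(\ZZ[\tfrac12])$ is terminal and thus $f_S\circ g=f_T$. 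By the preceding Proposition (applied to $\rho_S$, or equivalently to its cup-product inverse in $\KQ^{-8,-4}(S)=\KQ_{8,4}(S)$), the pair $(\KQ_S,\rho_S)$ is then $(8,4)$-periodic. Everything therefore reduces to exhibiting an invertible $\rho$ over the regular base $\spec(\ZZ[\tfrac12])$.

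Over a regular scheme $S$ — in particular over $\spec(\ZZ[\tfrac12])$ — I would apply (GW2) with $Z=S$, so that $S/S-Z=S_+$ and one gets natural identifications $\KQ^{n,i}(S)=\GW^{[i]}_{2i-n}(S)$ with Schlichting's higher Grothendieck--Witt groups. Schlichting's fundamental theorem supplies a natural $4$-fold periodicity equivalence $\mathbb{GW}^{[i]}(S)\simeq\mathbb{GW}^{[i+4]}(S)$, compatible with the multiplicative structure relating the various shifts via the products $\mathbb{GW}^{[m]}\wedge\mathbb{GW}^{[n]}\to\mathbb{GW}^{[m+n]}$. Translated through (GW2), this yields for every regular $S$ and all $(n,i)$ isomorphisms $\KQ^{n,i}(S)\xrightarrow{\ \sim\ }\KQ^{n+8,i+4}(S)$; evaluating at $(n,i)=(0,0)$ and tracking the ring structure, one identifies them with the cup product against the image $\rho_S\in\KQ^{8,4}(S)=\GW^{[4]}_0(S)$ of the unit $\vb 1\in\GW^{[0]}_0(S)$ under the periodicity isomorphism, and likewise produces the inverse class in $\KQ^{-8,-4}(S)=\GW^{[-4]}_0(S)$. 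In particular $\rho:=\rho_{\ZZ[1/2]}$ is invertible for the cup product, which is what was needed; concretely this $\rho$ is nothing but the Bott element of the geometric model $\mathbf{BO}$ of Panin--Walter.

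The one non-formal point is Schlichting's $4$-periodicity for $\mathbb{GW}^{[*]}$ together with the verification that the resulting chain of natural isomorphisms $\KQ^{n,i}(-)\cong\KQ^{n+8,i+4}(-)$ is genuinely implemented by the cup product with a single class $\rho_S$ — i.e. that the periodicity is multiplicative — since this is precisely what allows one to invoke the preceding Proposition. The remaining ingredients, namely the bidegree bookkeeping in (GW2) and the identification of the $\GW$-groups of a point appearing above with honest Grothendieck--Witt groups, are routine, and once $\rho$ is in hand the rest is formal, as explained in the first paragraph.
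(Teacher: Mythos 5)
Your proposal is correct and takes essentially the same route as the paper: the paper also defines $\rho_S$ as the image of the unit under an isomorphism $\KQ^{0,0}(S)\simeq\KQ^{8,4}(S)$ obtained from Schlichting's four-fold periodicity $\mathbb{GW}^{[0]}\simeq\mathbb{GW}^{[4]}$ combined with the representability/absoluteness properties (GW1)--(GW2). Your reduction to the base $\spec(\ZZ[\tfrac12])$ followed by pullback is just a repackaging of the naturality the paper invokes, so the two arguments coincide in substance.
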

This follows from the construction of the spectrum $\KQ_S$.
 The element $\rho_S$ can be defined using \cite[Prop. 7]{Schlicht1},
 which implies that there exists a canonical isomorphism of spectra:
$$
\mathbb GW^{[0]}(S) \simeq \mathbb GW^{[4]}(S).
$$
Therefore using (GW1),
 one gets an isomorphism: $\psi_S:\KQ^{0,0}(S) \xrightarrow{\sim} \KQ^{8,4}(S)$
 and we can put $\rho_S=\psi_S(1)$.

Following \cite{Anan1}, we introduce the following $\eta$-periodic spectra.
\begin{df}[Ananyevskiy]\label{df:KW}
Let $\eta:\un_S \rightarrow \un_S(-1)[-1]$ be the (desuspended) Hopf map.
 We define the $\eta$-periodized sphere spectrum $\un_S[\eta^{-1}]$ as:
$$
\un_S[\eta^{-1}]=\hocolim\big( \un_S \xrightarrow{\eta} \un_S(-1)[-1]
  \xrightarrow{\eta(-1)[-1]} \un_S(-2)[-2]  \xrightarrow{\eta(-2)[-2]} \hdots \big).
$$
Given any spectrum $\E$, we set $\E\wedge \un_S[\eta^{-1}]=\E[\eta^{-1}]$. In the special case of hermitian $K$-theory, we set $\KW_S=\KQ_S[\eta^{-1}]$. This defines an absolute ring spectrum.
\end{df}
In other words, $(\KW_S,\eta)$ is $(1,1)$-periodic.
 It is in fact the $(1,1)$-periodization of $\KQ_S$. Note also that the element
 $\rho_S \in \KQ^{8,4}(S)$ induces an element still denoted by $\rho_S \in \KW^{8,4}(S)$,
 and the above definition shows that $(\KW_S,\rho_S)$ is $(8,4)$-periodic.

Recall the following result of Ananyevskiy:

\begin{thm}[Ananyevskiy]
For any regular scheme $S$, there exists an isomorphism:
$$
\KW^{n,i}(S) \simeq W^{[i-n]}(S)
$$
where the right-hand side is Balmer's higher Witt groups.
\end{thm}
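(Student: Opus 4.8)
The plan is to convert $\KW_S = \KQ_S[\eta^{-1}]$ into an explicit filtered colimit of Schlichting's higher Grothendieck--Witt groups and then to recognise that colimit as a single Balmer Witt group. First I would unwind Definition~\ref{df:KW}: one has $\KW_S = \hocolim_m \KQ_S(-m)[-m]$, and since each $\un_S(i)[n]$ is compact in $\SH(S)$ the functor $[\un_S(i)[n],-]$ commutes with this homotopy colimit, so that
\[
\KW^{n,i}(S)\ \cong\ \varinjlim_{m \geq 0}\, \KQ^{n-m,\,i-m}(S),
\]
the transition maps being multiplication by $\eta$. Applying property (GW2) to the identity $S \to S$ (for which $S/(S-S) = S_{+}$) identifies the $m$-th term with $\GW^{[i-m]}_{2i-n-m}(S)$, i.e.\ with the $(2i-n-m)$-th homotopy group of Schlichting's spectrum $\mathbb{GW}^{[i-m]}(S)$; hence $\KW^{n,i}(S) \cong \varinjlim_m \GW^{[i-m]}_{2i-n-m}(S)$.

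The point is then that, as $m \to \infty$, the homotopy degree $2i-n-m$ becomes arbitrarily negative, so only the negative homotopy groups of the Grothendieck--Witt spectra of the \emph{regular} scheme $S$ are relevant. Here I would invoke Karoubi's fundamental theorem, in the form established by Schlichting in \cite{Schlicht1}: it provides a homotopy fibre sequence relating $\mathbb{GW}^{[j]}(S)$, algebraic $K$-theory, and the $L$-theory (derived Witt) spectrum, whose homotopy groups are Balmer's four-periodic higher Witt groups $W^{[\ast]}(S)$. Since $S$ is regular, its algebraic $K$-theory is connective, so the comparison map $\mathbb{GW}^{[j]}(S) \to \mathbb{L}^{[j]}(S)$ is an isomorphism on all homotopy groups in negative degrees; thus $\GW^{[j]}_r(S)$ is, for every $r < 0$, a Balmer Witt group of $S$, depending (modulo $4$) only on $j - r$. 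Consequently, for $m$ large, all the terms $\GW^{[i-m]}_{2i-n-m}(S)$ become identified with one and the same Balmer Witt group, whose shift is governed by $(i-m) - (2i-n-m) = n-i$.

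What remains — and what I expect to be the crux — is to verify that the transition maps (multiplication by $\eta$) are isomorphisms in this stable range; equivalently, that $\eta$ acts invertibly on Balmer's higher Witt groups, i.e.\ that derived Witt theory is $\eta$-periodic. This is a known fact (see \cite{Anan1}), and conceptually it expresses that inverting $\eta$ kills the ``algebraic $K$-theory half'' of the Karoubi fibre sequence: indeed $\KGL_S[\eta^{-1}] = 0$, since the unit map carries $\eta$ into $\KGL_S^{-1,-1}(S) = K_{-1}(S) = 0$ (one may check this over $\ZZ[1/2]$, which is regular), so $\eta$-multiplication vanishes on all $\KGL_S$-cohomology, and hence also on the homotopy-orbit term built from $\KGL_S$. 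Granting the $\eta$-periodicity, the colimit collapses to $W^{[i-n]}(S)$; the precise shift $[i-n]$ in the statement is what one obtains after matching Schlichting's and Balmer's indexing conventions against the motivic bidegree, a bookkeeping consistent with the $(1,1)$- and $(8,4)$-periodicities of $\KW_S$ (which already force the answer to depend only on $i-n$ modulo $4$). In sum, the substantive inputs are Karoubi's fundamental theorem for regular schemes together with the $\eta$-periodicity of Witt theory, and what is left is the index bookkeeping.
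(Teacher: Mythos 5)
This is essentially the paper's own route: the paper simply invokes Ananyevskiy's Theorem~6.5 and observes that its proof --- exactly your computation, writing $\KW_S$ as the filtered $\eta$-colimit, commuting a compact source past the colimit, identifying the terms via (GW2), and recognizing the stable (negative-degree) Grothendieck--Witt groups as Balmer's Witt groups via Schlichting --- carries over verbatim from smooth varieties to regular schemes. The one step you defer to the literature, namely that the $\eta$-transition maps are eventually isomorphisms, follows directly from the cofibre sequence $\KQ_S(1)[1] \xrightarrow{\eta} \KQ_S \to \KGL_S$ together with the vanishing of the negative K-theory of the regular scheme $S$ (this, rather than just $\KGL_S[\eta^{-1}]=0$, is the clean argument), so your outline is complete up to the indexing conventions you already flag.
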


This is stated in \cite[Theorem~6.5]{Anan1} in the special case of smooth varieties, but the same proof applies here.
This isomorphism is contravariantly functorial in $S$, and induces an isomorphism of bigraded rings.

\begin{rem}
Over non-regular schemes, $\KQ$ and $\KW$ represent the $\AA^1$-invariant versions of higher Grothendieck--Witt and higher Witt groups, respectively.
\end{rem}

\section{Rational Borel isomorphism}

\begin{num}
As $\KW$ is $\eta$-periodic, the unit map $\un_S \rightarrow \KW_S$
 uniquely factors through a map
\begin{equation}
\label{eq:unit_KW}
\un_S[\eta^{-1}]\rightarrow \KW_S.
\end{equation}
The uniqueness of the factorization ensures that the latter map
 is a morphism of ring spectra.
\end{num}

\begin{num}\label{num:Morel_decomposition}
We fix an arbitrary base scheme $S$.
The symmetry involution permuting the factors  $\GG \wedge \GG$ induces an
 involution $\epsilon:\un_S \rightarrow \un_S$. We have two complementary projectors
 on $\un_S[1/2]$:
$$
e_+=\frac{1-\epsilon} 2, \phantom{i} e_-=\frac{1+\epsilon} 2,
$$
yielding Morel's decomposition: $\un_S[1/2]=\un_{S+} \oplus \un_{S-}$.
 More generally, given any spectrum $\E$ over $S$, we get a canonical decomposition:
$$
\E[1/2]=\E_+ \oplus \E_-
$$
such that $\epsilon$ acts by $+1$ (resp. $-1$) on $E_+$ (resp. $\E_-$).

Recall from Morel's computation that one has $\eta=\epsilon\eta$.
 In particular, we get:
$$
\un_S[1/2,\eta^{-1}]=\un_{S-}.
$$
In view of Definition \ref{df:KW}, we then deduce:
$$
\KQ_- \simeq \KW[1/2]=\KW_-.
$$

Recall that $(\KW_S,\rho_s)$ is $(8,4)$-periodic.
 One deduces a canonical map:
$$
\bigoplus_{m \in \ZZ} \un_S(4m)[8m] \xrightarrow{\sum_m \rho_S^m} \KW_S.
$$
Taking the rational parts and projecting this map to the minus part,
 we finally obtain a canonical map, uniquely determined by $\rho_S$:
$$
\psi_S:\bigoplus_{m \in \ZZ} \QQ_{S-}(4m)[8m] \xrightarrow{\sum_m \rho_S^m} \KW_{S,\QQ-}.
$$
Note that by construction, the maps $\psi_S$ are compatible with pullbacks in $S$.
 The following result follows from \cite[Corollary~3.5]{ALP}.
\end{num}
\begin{thm}\label{thm:rational_witt}
For any scheme $S$, the map $\psi_S$ is an isomorphism.
\end{thm}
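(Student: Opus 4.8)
The plan is to reduce to the case $S=\spec(K)$ for $K$ a field of characteristic $\neq 2$, and then to read the statement off the rational computation of the $\eta$-inverted sphere spectrum due to Ananyevskiy--Levine--Panin.

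For the reduction of the base, observe first that the $\psi_S$ are compatible with arbitrary pullbacks and that both the source and the target are pulled back from $\spec(\ZZ[1/2])$: for the structural map $f\colon S\to\spec(\ZZ[1/2])$ one has $f^{*}\bigl(\QQ_{\ZZ[1/2]-}(4m)[8m]\bigr)=\QQ_{S-}(4m)[8m]$, since $f^{*}\un_{\ZZ[1/2]}=\un_S$ and $f^{*}$ commutes with the Morel decomposition, with Tate twists, shifts, direct sums and rationalization, while $f^{*}\KW_{\ZZ[1/2],\QQ-}=\KW_{S,\QQ-}$ since $\KQ$ is absolute by (GW1) and $f^{*}$ commutes with the $\eta$-colimit of Definition~\ref{df:KW}; moreover $f^{*}\rho_{\ZZ[1/2]}=\rho_S$. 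As $f^{*}$ is exact, it suffices to treat $S=\spec(\ZZ[1/2])$. For this base, the cone $C$ of $\psi_{\spec(\ZZ[1/2])}$ is a module over the $\eta$-inverted rational sphere $\un_{\ZZ[1/2]-,\QQ}$, so its stable homotopy sheaves are homotopy modules over $\spec(\ZZ[1/2])$; a strictly $\AA^{1}$-invariant Nisnevich sheaf on smooth $\ZZ[1/2]$-schemes vanishes as soon as it vanishes over all finitely generated field extensions of the prime fields, and by continuity the value of $C$ over such a field $K$ is the cone of $\psi_{\spec(K)}$. Hence it is enough to prove the theorem for $S=\spec(K)$.

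Over $K$ both sides of $\psi_K$ are Tate (cellular) spectra, so it suffices to check that $\psi_K$ induces an isomorphism on bigraded homotopy groups. By Ananyevskiy's theorem recalled above, $\pi_{n,i}(\KW_{K,\QQ-})=W(K)\otimes\QQ$ when $4\mid(i-n)$ and vanishes otherwise, the $4$-periodicity being that of Balmer's Witt groups. On the other side, \cite[Corollary~3.5]{ALP} identifies the bigraded ring $\pi_{\ast\ast}(\un_{K-,\QQ})$ with $W(K)_{\QQ}[\eta^{\pm 1}]$, the Hopf element $\eta$ lying in bidegree $(1,1)$; thus $\pi_{n,i}(\un_{K-,\QQ})=W(K)_{\QQ}$ for $n=i$ and $0$ otherwise, so that $\pi_{n,i}\bigl(\QQ_{K-}(4m)[8m]\bigr)=\pi_{n-8m,\,i-4m}(\un_{K-,\QQ})$ is $W(K)_{\QQ}$ exactly when $n-i=4m$. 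Summing over $m\in\ZZ$ therefore reproduces $W(K)_{\QQ}$ in all bidegrees with $4\mid(i-n)$, matching the target. Finally, on the $m$-th summand $\psi_K$ is, up to the $(8,4)$-periodicity isomorphism of $\KW_K$, multiplication by $\rho_K^{m}$, and under the above identifications $\rho_K$ is the unit $1\in W(K)$; hence $\psi_K$ is an isomorphism on homotopy groups, and so an isomorphism.

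The substance of the argument is the input \cite[Corollary~3.5]{ALP}, the rational computation of the $\eta$-inverted sphere spectrum. Granting that, what remains is the bookkeeping showing that the direct sum over $m\in\ZZ$ on the source accounts for the $4$-periodicity of the Balmer--Witt-valued target, together with the descent to fields; the latter is routine but is the point that requires a little care over the Dedekind base $\spec(\ZZ[1/2])$.
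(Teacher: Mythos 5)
Your overall strategy (reduce to fields, then invoke \cite[Corollary~3.5]{ALP} together with Ananyevskiy's computation of $\KW$) is the same as the paper's, and your field-level verification is fine in substance — indeed more explicit than the paper's, though the cellularity of $\KQ_K$ (hence of $\KW_{K,\QQ-}$) that you use to pass from bigraded homotopy groups to an equivalence is asserted without justification and deserves a reference; alternatively one can argue with homotopy sheaves over the field and avoid it.

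The genuine problem is your reduction to fields. After pulling back to $S=\spec(\ZZ[1/2])$ you claim that the cone $C$ of $\psi_{\ZZ[1/2]}$ vanishes because its homotopy sheaves are strictly $\AA^1$-invariant sheaves on smooth $\ZZ[1/2]$-schemes, and ``a strictly $\AA^1$-invariant Nisnevich sheaf on smooth $\ZZ[1/2]$-schemes vanishes as soon as it vanishes over all finitely generated field extensions of the prime fields.'' This is precisely the kind of unramifiedness/Gersten statement whose proofs (Morel) use in an essential way that the base is a (perfect) field; over a mixed-characteristic Dedekind base such as $\ZZ[1/2]$ it is not a routine fact and you give no argument or reference for it (nor for the non-degeneracy/convergence input needed to deduce $C\simeq 0$ from the vanishing of its homotopy sheaves). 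As stated, this step is a gap. It is also unnecessary: a much weaker and elementary statement suffices, namely that for any noetherian scheme $S$ the family of pullback functors $i_x^*:\SH(S)\to\SH(\kappa(x))$, over all points $x\in S$, is conservative. This is Lemma~\ref{lm:point_conservative} of the paper, proved by noetherian induction using continuity of $\SH$ at generic points and the localization triangle $j_!j^* \to \mathrm{id} \to i_*i^*$; applying it to the cone of $\psi_S$ (using that $\psi_S$ is compatible with arbitrary base change, as you note) reduces the theorem directly to residue fields of characteristic $\neq 2$, with no appeal to homotopy sheaves over the Dedekind base. Replace your detection argument by this conservativity lemma and the proof is complete.
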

\begin{proof}
Since the formation of the map $\psi_S$ is compatible with base change, by Lemma~\ref{lm:point_conservative} below we are reduced to the case where $S$ is the spectrum of a field (of characteristic different from $2$, since $S$ is a $\ZZ[1/2]$-scheme by our conventions).
In this case the result follows from \cite[Corollary~3.5]{ALP}.
\end{proof}

The following standard result, used in the proof above, does not require the assumptions of our global conventions.

\begin{lm}
\label{lm:point_conservative}
Let $S$ be a noetherian scheme. For any point $x$ of $S$, denote by $i_x:\spec{\kappa(x)} \rightarrow S$ the inclusion of the spectrum of the residue field. Then the family of functors $i_x^*:\SH(S)\to\SH(\kappa(x))$, for all points $x\in S$, is conservative.
That is, a map $f$ in $\SH(S)$ is an isomorphism if and only if for every point $x\in S$, the map $i_x^*(f)$ is an isomorphism in $\SH(\kappa(x))$.
\end{lm}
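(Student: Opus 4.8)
The plan is to deduce the statement, in the usual way, from the localization property of $\SH$ together with its continuity, by a Noetherian induction. First I would observe that, since each $i_x^*$ is a triangulated functor, it carries the cone of a map $f$ to the cone of $i_x^*(f)$; hence the Lemma is equivalent to the assertion that if $C \in \SH(S)$ satisfies $i_x^* C \simeq 0$ for all $x \in S$, then $C \simeq 0$. Applying the localization property of $\SH$ to the nil-immersion $S_{\mathrm{red}} \hookrightarrow S$ (whose open complement is empty) shows that pullback $\SH(S) \xrightarrow{\sim} \SH(S_{\mathrm{red}})$ is an equivalence, so I may assume $S$ reduced, and I will then prove, by Noetherian induction on the closed subsets of $S$, the statement $P(T)$: ``for every reduced closed subscheme $T$ of $S$ and every $C_T \in \SH(T)$ with $i_x^* C_T \simeq 0$ for all $x \in T$, one has $C_T \simeq 0$'' (the inductive hypothesis being $P(T')$ for all closed $T' \subsetneq T$, the case $T = \emptyset$ being trivial).

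For the inductive step, fix a generic point $\eta$ of $T$ and an affine open neighbourhood $U = \spec A \subseteq T$ of $\eta$, with associated prime $\mathfrak p \subseteq A$. Since $T$ is reduced and $\eta$ is a generic point, $\mathfrak p$ is a minimal prime of the reduced ring $A$, so the localization $A_{\mathfrak p}$ is a field, equal to $\kappa(\eta)$, and $\spec \kappa(\eta) = \varprojlim_{g \notin \mathfrak p} \spec A_g$ is a cofiltered limit with affine transition maps. By the continuity of $\SH$, the pullback functors induce an equivalence $\varinjlim_{g \notin \mathfrak p} \SH(\spec A_g) \xrightarrow{\sim} \SH(\spec \kappa(\eta))$; as $C_T|_U \in \SH(\spec A)$ maps to $i_\eta^* C_T \simeq 0$ in $\SH(\spec \kappa(\eta))$, and Hom-groups in a filtered colimit of categories are colimits of Hom-groups, there is some $g \notin \mathfrak p$ with $C_T|_{\spec A_g} \simeq 0$. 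Set $V = \spec A_g$, a nonempty open of $T$, and let $\iota \colon W \hookrightarrow T$ be the complementary closed immersion taken with its reduced structure, $\jmath \colon V \hookrightarrow T$ the corresponding open immersion. For $x \in W$ the inclusion $i_x$ factors through $\iota$ (passing to the reduced structure leaves residue fields unchanged), so $\iota^* C_T$ satisfies the hypothesis of $P(W)$; since $V \neq \emptyset$ we have $W \subsetneq T$, and the inductive hypothesis yields $\iota^* C_T \simeq 0$. In the localization cofiber sequence $\jmath_! \jmath^* C_T \to C_T \to \iota_* \iota^* C_T$ both outer terms now vanish, whence $C_T \simeq 0$. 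Taking $T = S$ proves the Lemma.

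The only non-formal ingredient, and hence the step I expect to be the crux, is the passage from the vanishing of $C_T$ restricted to the generic point of an affine open to its vanishing on some smaller nonempty affine open: this is exactly the continuity of $\SH$ along cofiltered limits of schemes with affine transition maps (here, localizations of a ring), a standard property of the motivic stable homotopy category (see \cite{CD3}). Granting it, the remaining ingredients --- exactness of $i_x^*$, the localization triangle, invariance under nil-immersions (itself a consequence of localization), and the well-foundedness of the induction --- are all formal.
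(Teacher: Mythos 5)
Your argument is essentially the paper's own proof: reduce to showing that an object whose pullback to every residue field vanishes is zero, then do Noetherian induction, using continuity of $\SH$ at a generic point to obtain vanishing on a nonempty open neighbourhood and the localization triangle $j_!j^*A \to A \to i_*i^*A$ (with reduced closed complement) to conclude. The only differences are cosmetic: your preliminary reduction to the reduced scheme and the explicit affine localizations $\spec A_g$ make precise the paper's shorthand $\eta=\varprojlim U$, and your Hom-colimit justification is exactly the continuity property the paper invokes.
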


\begin{proof}
We use the following notation: if $f:X\to Y$ is a morphism of schemes and $P\in\SH(Y)$, we denote by $P_{|X}$ the object $f^*P\in\SH(X)$.

It suffices to prove that if $A\in\SH(S)$ is an object such that $A_{|x}\in\SH(\kappa(x))$ is $0$ for all points $x\in S$, then $A=0$. We proceed by noetherian induction on the scheme $S$. Suppose that the claim holds for every proper closed subscheme of $S$. Let $A\in\SH(S)$ such that $A_{|x}\in\SH(\kappa(x))$ is $0$ for all points $x\in S$. Let $\eta$ be a generic point of $S$, then $A_{|\eta}=0$ by hypothesis. Since $\eta=\varprojlim(U)$ where $U$ runs over all open neighborhoods of $\eta$ in $S$, it follows by continuity of $\SH$ that there exists a non-empty open subscheme $U$ of $S$ such that $A_{|U}=0$. Denote by $j:U\to S$ the open immersion, with reduced closed complement $i:Z\to S$. By the localization theorem we have a distinguished triangle in $\SH(S)$:
$$
j_!A_{|U}\to A\to i_*A_{|Z}\to j_!A_{|U}[1].
$$
By assumption and the induction hypothesis, $A_{|U}=0$ and $A_{|Z}=0$, which implies that $A=0$ in $\SH(S)$, and the result follows.
\end{proof}

\begin{df}
We denote by
  \begin{equation*}
    \borel_{S,-} : \KW_{S,\QQ-} \to \bigoplus_{m \in \ZZ} \QQ_{S-}(4m)[8m].
  \end{equation*}
the isomorphism of ring spectra inverse to $\psi_S$.
\end{df}

Note the following remarkable corollary, due to the fact that $\KQ$ is Sp-oriented.
\begin{cor}\label{cor:sp-orinted}
For any scheme $S$, any rational ring spectrum over $S$ admits a canonical
 $\mathrm{Sp}$-orientation.
 Indeed, $\un_{\QQ,S}$ is the universal $\mathrm{Sp}$-orientable ring spectrum over $S$.
In particular, the Thom space functor factors through Deligne's Picard functor
 as follows:
$$
\xymatrix@R=15pt@C=30pt{
\uK(S)\ar^{\Th_{S,\QQ}}[r]\ar_{(\det,\rk)}[d] & \SH(S)_\QQ^{\otimes} \\
\uPic(S)\ar@{-->}_{\Th'_{S,\QQ}}[ru]
}
$$
In particular, the rational stable Thom space of a vector bundle depends only
 on its determinant and its rank.
\end{cor}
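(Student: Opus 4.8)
The plan is to reduce the whole statement to the single claim that $\un_{\QQ,S}$ carries a canonical $\mathrm{Sp}$-orientation. Indeed, an $\mathrm{Sp}$-orientation on a ring spectrum induces one on every algebra over it, by pushing the symplectic Thom classes forward along the structural ring map; since $\un_{\QQ,S}$ is the initial rational ring spectrum over $S$, a canonical $\mathrm{Sp}$-orientation of $\un_{\QQ,S}$ propagates canonically to every rational ring spectrum, which is the content of the first two assertions. Using Morel's decomposition $\un_{\QQ,S}=\QQ_{S+}\oplus\QQ_{S-}$ — a decomposition as a product of ring spectra, via the idempotents $e_{\pm}$ — an $\mathrm{Sp}$-orientation of $\un_{\QQ,S}$ is the same datum as a pair of $\mathrm{Sp}$-orientations of $\QQ_{S+}$ and of $\QQ_{S-}$, so it suffices to treat the two parts separately.

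The plus part is straightforward: $\QQ_{S+}=\un_{S+,\QQ}$ is, by \cite[Sect.~16.2]{CD3}, the pullback to $S$ of Beilinson's rational motivic cohomology spectrum, which is orientable, hence a fortiori $\mathrm{Sp}$-orientable — the universal Thom class restricting along $\mathrm{BSp}_{2n}\to\mathrm{BGL}_{2n}$ to a compatible system of symplectic Thom classes — with the resulting orientation canonical and stable under base change.

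The minus part is the heart of the matter, and is the step I expect to be the main obstacle. Since $\KQ_S$ is $\mathrm{Sp}$-oriented \cite{PaninWalter1} and inverting $\eta$ respects ring structures, $\KW_S=\KQ_S[\eta^{-1}]$ is $\mathrm{Sp}$-oriented; because $\eta=\epsilon\eta$ one has $\KW_{S,\QQ}=\KW_{S,\QQ-}$, which by Theorem~\ref{thm:rational_witt} is identified by $\borel_{S,-}$ with $\bigoplus_{m\in\ZZ}\QQ_{S-}(4m)[8m]$ in such a way that the unit map $\QQ_{S-}=\un_{S-,\QQ}\to\KW_{S,\QQ-}$ becomes the inclusion of the summand of index $m=0$ in $\psi_S=\sum_m\rho_S^m$. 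To descend the $\mathrm{Sp}$-orientation along this unit one checks that the universal symplectic Thom class of $\KW_{S,\QQ-}$ — equivalently the first Pontryagin class $p_1^{\KW}\in\KW_{S,\QQ-}^{4,2}(\HGr(1,\infty))$ of the tautological rank-$2$ symplectic bundle over quaternionic projective space — lies in the summand $m=0$, i.e. lifts to $\un_{S-,\QQ}^{4,2}(\HGr(1,\infty))$; the normalization and multiplicativity of the lift are then inherited from $\KW_{S,\QQ-}$ because the unit is split injective on the relevant cohomology groups (those of the Thom spaces of the tautological bundles over the $\HGr(r,\infty)$). This lifting is a statement about the $\un_{S-,\QQ}$-cohomology of quaternionic projective space; being compatible with base change, it reduces by Lemma~\ref{lm:point_conservative} to the case $S=\spec k$ with $k$ a field of characteristic $\neq 2$, where it follows from the computations of Ananyevskiy and Ananyevskiy--Levine--Panin \cite{Anan1,ALP}. (Alternatively, one may invoke the universal symplectic Thom spectrum together with its rational splitting into Tate twists of $\QQ_{S+}$ and $\QQ_{S-}$ to obtain directly a ring map $\mathrm{MSp}_S\to\un_{\QQ,S}$ and pull back the universal orientation along it.)

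Granting that $\un_{\QQ,S}$ is $\mathrm{Sp}$-oriented, the last assertion is formal. On $\QQ_{S+}$ the orientation gives a Thom isomorphism $\Th(V)\simeq\Sigma^{2r,r}$ depending only on $r=\rk V$. On $\QQ_{S-}$, which is $\eta$-inverted, the $\mathrm{Sp}$-orientation refines to an $\mathrm{SL}$-orientation — the forgetful map $\mathrm{MSp}[\eta^{-1}]\to\MSL[\eta^{-1}]$ being an equivalence, by Ananyevskiy — whence a Thom isomorphism $\Th(V)\simeq\Sigma^{2(r-1),r-1}\otimes\Th'(\det V)$ depending only on the pair $(\det V,\rk V)$. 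Combining the two parts, $\Th_{S,\QQ}(V)$ is a functor of $(\det V,\rk V)$ alone, which is precisely the factorization of $\Th_{S,\QQ}$ through $(\det,\rk)\colon\uK(S)\to\uPic(S)$; in particular the rational stable Thom space of a vector bundle is determined by its rank and determinant.
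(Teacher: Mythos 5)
Your overall strategy (reduce to Sp-orienting $\un_{\QQ,S}$, split via $e_\pm$, orient $\QQ_{S+}$ through Beilinson motivic cohomology, orient $\QQ_{S-}$ using the Sp-orientation of $\KQ$/$\KW$ and the decomposition of Theorem~\ref{thm:rational_witt}) is the same as the paper's, which deduces the corollary from the fact that $\KQ$ is Sp-oriented together with \eqref{eq:pontryagin}. But the step you yourself flag as the heart of the matter has a genuine gap: you assert that the universal symplectic Thom class (equivalently the Borel/Pontryagin class $p_1^{\KW}$ of the tautological bundle on $\HGr(1,\infty)$) of $\KW_{S,\QQ-}$ \emph{lies in the $m=0$ summand}, i.e.\ lifts along the split unit $\QQ_{S-}\to\KW_{S,\QQ-}$. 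No argument is given (``one checks''), and there is every reason to expect it is false: this is exactly analogous to claiming that the Chern character of the K-theoretic class of $\mathcal O(1)$ lies in the weight-one summand, whereas $\ch(c_1^{\KGL})=1-e^{-c_1}$ has components in all weights; likewise $\borel_{S,-}(p_1^{\KW})$ is a power series in the $\QQ_{S-}$-Borel class with nontrivial higher components (this is precisely what the explicit formulas of \cite{DFJK1} compute). The correct move is not to lift but to \emph{project}: take the $m=0$ component of the $\KW$-class under the globally defined, base-change compatible decomposition, observe that the $m\neq 0$ components restrict to zero on $\HP^1$ for degree reasons (the reduced $\QQ_{S-}$-cohomology of $\HP^1$ in bidegree $(4-8m,2-4m)$ vanishes unless $m=0$), so the projected class is normalized, and then invoke the Panin--Walter theory that a single normalized Borel/Thom class on the tautological rank-$2$ bundle generates a full Sp-orientation. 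In particular multiplicativity comes from that machinery; it cannot be ``inherited by split injectivity'' once the class is only a projection rather than a lift.

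Two further points. First, your reduction to fields is not legitimate for this purpose: Lemma~\ref{lm:point_conservative} says the family $i_x^*$ detects isomorphisms in $\SH(S)$; it does not say $i_x^*$ is injective on bigraded cohomology groups of Thom spaces, so an element-level statement (vanishing of the $m\neq0$ components of a specific class) cannot be checked pointwise. Second, the parenthetical alternative — a rational splitting of $\mathrm{MSp}_S$ into Tate twists of $\QQ_{S\pm}$ yielding a ring map $\mathrm{MSp}_S\to\un_{\QQ,S}$ — presupposes essentially what is to be proved, and in any case projecting a graded ring decomposition onto its degree-zero piece is not a morphism of ring spectra (the same reason one cannot simply compose $\mathrm{MSp}\to\KQ_\QQ$ with the projection in \eqref{eq:pontryagin}). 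The remaining parts of your write-up (propagation along the unit, the plus part, and the deduction that rational Thom spectra depend only on determinant and rank via $\eta$-inverted SL- versus Sp-orientations) are fine, though the equivalence $\mathrm{MSp}[\eta^{-1}]\simeq\MSL[\eta^{-1}]$ deserves a more careful attribution.
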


Using \cite{DJK} and the Sp-orientability from the previous corollary,
 one deduces the following result.
\begin{cor}\label{cor:fdl}
Let $\E$ be an arbitrary rational ring spectrum $\E$ over $S$.
 For a pair of integers $(n,i) \in \ZZ^2$, a $S$-scheme $X$ with structural map $p$
 (resp. $p$ separated of finite type),
 and a line bundle $L$ over $X$, one puts:
\begin{align*}
\E^{n,r}(X,L)&=\Hom_{\SH(S)}(\un_X,p^*\E(r)[n] \otimes \Th_S(L)), \\
\text{resp. } \E_{n,r}(X/S,L)&=\Hom_{\SH(S)}(\un_X(r)[n] \otimes \Th_S(L),p^!\E).
\end{align*}
Then for any smoothable lci morphism
  $f:X \rightarrow S$ of relative virtual dimension $r$,
 there exists a fundamental class $\eta_f \in \E_{n,r}(X/S,\det{L_f})$.
 Altogether, these fundamental classes satisfy
 compatibility with composition and the excess intersection formula.
\end{cor}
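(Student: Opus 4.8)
The plan is to derive the statement from two inputs: the construction of fundamental classes for an \emph{arbitrary} motivic ring spectrum carried out in \cite{DJK}, and the Sp-orientation of every rational ring spectrum provided by Corollary~\ref{cor:sp-orinted}. The first input supplies fundamental classes twisted by the full (shifted) cotangent complex $L_f$; the second is what allows us to reduce the twist to the determinant $\det L_f$.

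Concretely, recall that \cite{DJK} attaches to $\E$ and to any smoothable lci morphism $f : X \to S$ of relative virtual dimension $r$ a fundamental class $\eta_f$ in the Borel--Moore homology of $X/S$ twisted by $\Th_X(L_f)$ — normalized so that $\eta_f$ recovers the purity isomorphism when $f$ is smooth — and that these classes are compatible with composition (via the canonical identification of Thom spaces induced by the exact triangle $f^*L_g \to L_{gf} \to L_f$), with arbitrary base change, and with excess intersection. None of this uses any hypothesis on $\E$. Now $\E$, being rational, is canonically an algebra over $\un_{S,\QQ}$, so by Corollary~\ref{cor:sp-orinted} the Thom space functor $\uK(X) \to \SH(X)_\QQ$, smashed with $\E$ and naturally in $X$, factors through $(\det,\rk) : \uK(X) \to \uPic(X)$. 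Applying this to the class $[L_f] \in \uK(X)$, which has the same image in $\uPic(X)$ as $[\det L_f] + (r-1)[\OO_X]$, yields a canonical Thom isomorphism
$$
\Th_X(L_f) \wedge \E \;\simeq\; \Th_X(\det L_f) \wedge \E(r-1)[2r-2];
$$
transporting $\eta_f$ through it produces the asserted fundamental class $\eta_f \in \E_{n,r}(X/S,\det L_f)$, in the bidegree dictated by the above normalization.

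It remains to see that compatibility with composition and the excess intersection formula survive this transport. The point is that the Thom isomorphism just constructed is \emph{monoidal}: it is deduced from the monoidal factorization of $\Th$ through $(\det,\rk)$, and $\det(v \oplus w) \simeq \det v \otimes \det w$. Hence it is compatible with the canonical isomorphisms $\Th_X(L_{gf}) \simeq \Th_X(f^*L_g) \wedge \Th_X(L_f)$ — which merely reflect $[L_{gf}] = [f^*L_g] + [L_f]$ in $\uK(X)$, whence $\det L_{gf} \simeq f^*\det L_g \otimes \det L_f$ — and, in the same way, with the twist by (the determinant of) the excess bundle occurring in the excess formula. The composition and excess identities for the $\det L_f$-twisted classes therefore follow formally from those for the $\Th_X(L_f)$-twisted classes of \cite{DJK}. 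The genuine mathematical content of the statement is thus Corollary~\ref{cor:sp-orinted}; what is left is essentially a bookkeeping of twists and degrees, and the only point requiring care is to match the conventions of \cite{DJK} with the monoidal factorization of Corollary~\ref{cor:sp-orinted} precisely enough that it propagates through all of the compatibility diagrams there.
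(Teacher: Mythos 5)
Your proposal is correct and follows essentially the same route as the paper, whose entire argument is the sentence ``Using \cite{DJK} and the Sp-orientability from the previous corollary, one deduces the following result'': you invoke the fundamental classes of \cite{DJK} twisted by $\Th(L_f)$ and use the factorization of the Thom space functor through $(\det,\rk)$ from Corollary~\ref{cor:sp-orinted} to replace that twist by $\det L_f$ (up to Tate twist by the rank), checking that monoidality of this factorization preserves the composition and excess intersection compatibilities. You have simply spelled out the bookkeeping the paper leaves implicit.
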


\begin{num}
Let again $S$ be an arbitrary scheme.
 Recall from \cite[Th. 3.4]{RonOst} that one has a canonical distinguished triangle:
$$
\KQ_S(1)[1] \xrightarrow{\eta} \KQ_S \xrightarrow f \KGL_S \rightarrow \KQ_S(1)[2]
$$
where $\KGL$ is the spectrum representing the homotopy invariant $K$-theory over $S$
 and $f$ the \emph{forgetful map}.

As $\eta_+=0$ and $\KGL_{S-}=0$, we immediately deduce the following result.
\end{num}
\begin{prop}
One has a split exact sequence in $\SH(S)[1/2]$, and more precisely in $\SH(S)_+$,
 the essential image of the projector $e_+$ of Paragraph \ref{num:Morel_decomposition}:
$$
0 \rightarrow \KQ_{S+}
 \xrightarrow f \KGL_S[1/2] \rightarrow \KQ_{S+}(1)[2] \rightarrow 0.
$$
In other words, $\KGL_S[1/2] \simeq \KQ_{S+} \oplus \KQ_{S+}(1)[2]$.
\end{prop}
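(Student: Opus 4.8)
The plan is to obtain the short exact sequence by applying the projector $e_+$ of Paragraph~\ref{num:Morel_decomposition} to the distinguished triangle
$$
\KQ_S(1)[1] \xrightarrow{\eta} \KQ_S \xrightarrow{f} \KGL_S \rightarrow \KQ_S(1)[2]
$$
recalled just above the statement. Since $\SH(S)[1/2]$ decomposes as the product $\SH(S)_+ \times \SH(S)_-$ and $e_+$ is the projection onto the first factor — equivalently, $e_+$ is the functor of smashing with $\un_{S+}$ — it is a triangulated functor, so applying it termwise produces a distinguished triangle
$$
\KQ_{S+}(1)[1] \xrightarrow{\eta_+} \KQ_{S+} \xrightarrow{f_+} \KGL_{S+} \rightarrow \KQ_{S+}(1)[2]
$$
in $\SH(S)_+$. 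First I would record the two inputs already noted in the text: Morel's relation $\eta = \epsilon\eta$ gives $\eta_+ = e_+\eta = \tfrac12(\eta - \epsilon\eta) = 0$, and the orientability of $\KGL$ forces $\epsilon$ to act by $-1$ on it, so that $\KGL_{S-} = 0$ and the canonical map $\KGL_S[1/2] = \KGL_{S+}\oplus\KGL_{S-} \to \KGL_{S+}$ is an isomorphism identifying $f_+$ with the $1/2$-localization of the forgetful map $f$.

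With these identifications the triangle reads $\KQ_{S+}(1)[1] \xrightarrow{0} \KQ_{S+} \xrightarrow{f} \KGL_S[1/2] \rightarrow \KQ_{S+}(1)[2]$. I would then invoke the standard fact that a distinguished triangle with vanishing first (equivalently, after rotation, vanishing connecting) morphism splits: from the long exact sequence obtained by applying $\Hom(\KGL_S[1/2],-)$ one sees that $f$ admits a retraction, hence $f$ is a split monomorphism and $\KGL_S[1/2] \simeq \KQ_{S+}\oplus\KQ_{S+}(1)[2]$, with the third map a split epimorphism. Interpreting a distinguished triangle with zero connecting map as a short exact sequence, this is exactly the asserted split exact sequence $0 \to \KQ_{S+} \xrightarrow{f} \KGL_S[1/2] \to \KQ_{S+}(1)[2] \to 0$ in $\SH(S)_+$, and a fortiori in $\SH(S)[1/2]$.

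I do not expect a genuine obstacle here; the only points deserving a line of justification are (i) that $e_+$ preserves distinguished triangles, which is immediate since it is (smashing with an idempotent summand of $\un_S$, hence) an exact endofunctor; (ii) the matching of $f_+$ with $f$ under $\KGL_{S+}\simeq\KGL_S[1/2]$, which is naturality of Morel's decomposition applied to the morphism $f$; and (iii) the two vanishing claims $\eta_+ = 0$ and $\KGL_{S-} = 0$, which are the substantive ingredients but were already established in the excerpt. One could equally well bypass the projector language and argue directly that $\eta$ acts by $0$ on the $\epsilon$-antiinvariant spectrum $\KQ_{S+}$ while $\KGL_S[1/2]$ is $\epsilon$-invariant; the resulting proof is the same.
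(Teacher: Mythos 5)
Your argument is correct and matches the paper's own, which likewise deduces the proposition immediately by applying the plus projector to the triangle of \cite{RonOst} together with the two facts $\eta_+=0$ and $\KGL_{S-}=0$ (the explicit section $\tfrac12 f(1)[2]$ of $H$ is recorded only afterwards, as a supplement). One small slip: the retraction of $f$ comes from applying $\Hom(-,\KQ_{S+})$ to the triangle (or a section of the third map from $\Hom(\KQ_{S+}(1)[2],-)$), not from $\Hom(\KGL_S[1/2],-)$; the splitting criterion you invoke is of course standard.
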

There is moreover a canonical splitting of the above map. Indeed, consider the ``twisted'' forgetful map
\[
f(1)[2]: \KQ_{S,+}(1)[2]\to  \KGL_S(1)[2]\simeq  \KGL_S
\]
Then, the composite 
\[
\KQ_{S,+}(1)[2]\xrightarrow{f(1)[2]}  \KGL_S\xrightarrow{H}  \KQ_{S+}(1)[2]
\]
is just multiplication by $2$ and $\frac 12f(1)[2]$ is a section of $H$.

Recall from \cite{Riou, CD3} that the classical Chern character corresponds to an
 isomorphism of the following form in $\SH(S)$:
$$
\ch:\KGL_{S,\QQ} \rightarrow \bigoplus_{m \in \ZZ} \QQ_{S+}(m)[2m]
$$
where $\QQ_{S+}$ is identified with the rational motivic Eilenberg-MacLane spectrum
 (equivalently, the universal orientable ring spectrum).
\begin{prop}
The composition
$$
\KQ_{S,\QQ+} \xrightarrow f \KGL_{\QQ,S}
 \xrightarrow{\ch} \bigoplus_{m \in \ZZ} \QQ_{S+}(m)[2m]
$$
induces an isomorphism
$$
\KQ_{S,\QQ+} \xrightarrow{\borel_{S,+}} \bigoplus_{m \in \ZZ} \QQ_{S+}(2m)[4m].
$$ 
\end{prop}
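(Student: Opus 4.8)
The plan is to reduce to a base field and then exploit that, rationally and on the plus part, the whole situation takes place inside $\DM(k,\QQ)\simeq\SH(k)_{\QQ,+}$, where the Tate twists of the unit are pairwise ``orthogonal''.

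First, $\ch\circ f$ is compatible with arbitrary base change (as are $f$ and $\ch$ individually). Write $\bigoplus_m\QQ_{S+}(m)[2m]=E_S\oplus O_S$ with $E_S=\bigoplus_m\QQ_{S+}(2m)[4m]$ and $O_S=\bigoplus_m\QQ_{S+}(2m+1)[4m+2]=E_S(1)[2]$; the assertion is that the component of $\ch\circ f$ in $O_S$ vanishes and that its component in $E_S$ is an isomorphism. Both are conditions on morphisms of $\SH(S)_\QQ$ stable under the functors $i_x^*$, so Lemma~\ref{lm:point_conservative} reduces us to $S=\spec k$ with $k$ a field (automatically of characteristic $\neq 2$). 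Since $\KGL_{k-}=0$ and $\KQ_{k,\QQ+}$ lies in $\SH(k)_{\QQ,+}$ (the plus part of $\SH(k)_\QQ$) by construction, all the spectra and maps in sight live in $\SH(k)_{\QQ,+}\simeq\DM(k,\QQ)$; there the unit $\QQ_{k+}$ is compact and $\Hom(\QQ_{k+}(a)[2a],\QQ_{k+}(b)[2b])=\CH^{b-a}(\spec k)_\QQ$, which is $\QQ$ for $a=b$ and $0$ otherwise. In particular $\operatorname{End}\big(\bigoplus_m\QQ_{k+}(m)[2m]\big)=\prod_m\QQ$, so every idempotent of $\bigoplus_m\QQ_{k+}(m)[2m]$ is ``diagonal''.

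I would then pin down the object $\KQ_{k,\QQ+}$. By the preceding Proposition it is a direct summand of $\bigoplus_m\QQ_{k+}(m)[2m]$, so by the previous paragraph $\KQ_{k,\QQ+}\simeq\bigoplus_{m\in I}\QQ_{k+}(m)[2m]$ for some $I\subseteq\ZZ$. The splitting $\KGL_{k,\QQ}\simeq\KQ_{k,\QQ+}\oplus\KQ_{k,\QQ+}(1)[2]$ then forces $I$ and $I+1$ to partition $\ZZ$, hence $I=2\ZZ$ or $I=2\ZZ+1$. As $\KQ_{k,\QQ+}\neq 0$ (its sum with a twist of itself is $\KGL_{k,\QQ}\neq 0$), the unit $\QQ_{k+}\to\KQ_{k,\QQ+}$ is nonzero, so $\Hom(\QQ_{k+},\KQ_{k,\QQ+})=\bigoplus_{m\in I}\CH^m(\spec k)_\QQ\neq 0$; this forces $0\in I$, hence $I=2\ZZ$ and $\KQ_{k,\QQ+}\simeq\bigoplus_m\QQ_{k+}(2m)[4m]$.

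Finally, $f$ is a split monomorphism (by the isomorphism $(f,\tfrac12 f(1)[2])$ of the Proposition), so $\ch\circ f\colon\KQ_{k,\QQ+}\simeq\bigoplus_j\QQ_{k+}(2j)[4j]\to\bigoplus_n\QQ_{k+}(n)[2n]$ is, again by diagonality, of the form $\bigoplus_j\lambda_j\,\iota_{2j}$ with $\iota_n$ the inclusion of the $n$-th summand and $\lambda_j\in\QQ$; in particular it has no component in $O:=\bigoplus_m\QQ_{k+}(2m+1)[4m+2]$, which is the first half of the assertion. Composing $(f,\tfrac12 f(1)[2])$ with $\ch$ exhibits the pair $(\ch\circ f,\ \tfrac12(\ch\circ f)(1)[2])$ as an isomorphism onto $\bigoplus_n\QQ_{k+}(n)[2n]=E\oplus O$ (with $E:=\bigoplus_m\QQ_{k+}(2m)[4m]$); since $\ch\circ f$ maps into $E$ and $(\ch\circ f)(1)[2]$ maps into $O$, surjectivity forces every $\lambda_j\neq 0$, whence $\ch\circ f$ induces an isomorphism $\KQ_{k,\QQ+}\xrightarrow{\sim}E$, which is the asserted isomorphism $\borel_{S,+}$ (over general $S$, by the reduction above). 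I expect the main obstacle to lie in the bookkeeping needed to pass legitimately to $\DM(k,\QQ)$ and to manipulate the (a priori infinite) direct sums and their idempotents; the geometric fact that the underlying bundle of a symmetric form is self-dual, hence has Chern character concentrated in even degrees, is here entirely absorbed into the vanishing of $\Hom$ from odd to even Tate twists of $\QQ_{k+}$ over a point.
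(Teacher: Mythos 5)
Your field-level argument is essentially sound and is a genuinely different route from the paper's: over a field you exploit that $\QQ_{k+}$ is rational motivic cohomology, so that $\Hom(\QQ_{k+}(a)[2a],\QQ_{k+}(b)[2b])$ vanishes for $a\neq b$, and you use the splitting $\KGL_{k,\QQ}\simeq\KQ_{k,\QQ+}\oplus\KQ_{k,\QQ+}(1)[2]$ together with the nonvanishing of the unit to force $\KQ_{k,\QQ+}\simeq\bigoplus_m\QQ_{k+}(2m)[4m]$ and make $\ch\circ f$ diagonal; these steps (diagonality of endomorphisms and idempotents, the partition argument giving $I=2\ZZ$, the surjectivity argument forcing all $\lambda_j\neq0$) are correct. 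The paper instead argues uniformly over any base $S$: it produces a $(4,2)$-periodicity element $\sigma_S\in\KQ_+^{4,2}(S)$ with $\sigma_S^2=\rho_S$ and $f(\sigma_S)=\beta^2$, so that $\ch\circ f$ intertwines the periodicity of $\KQ_{S,\QQ+}$ with the square of the Bott shift on $\bigoplus_m\QQ_{S+}(m)[2m]$, with no reduction to points needed.

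The genuine gap is in your reduction step. Lemma~\ref{lm:point_conservative} says the family $i_x^*$ detects \emph{isomorphisms}; it does not detect the \emph{vanishing} of a morphism, and for the map at hand this is not a harmless technicality. Indeed, once the even component is known to be invertible, the odd component amounts to a morphism $\bigoplus_m\QQ_{S+}(2m)[4m]\to\bigoplus_j\QQ_{S+}(2j+1)[4j+2]$, whose matrix entries are classes in $H^{2n,n}_{M}(S,\QQ)$ for various $n>0$, e.g.\ in $\CH^1(S)_\QQ=\Pic(S)_\QQ$; every such class restricts to zero at every point of $S$ while being possibly nonzero globally, so pointwise vanishing of the $O_S$-component proves nothing over a general base. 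What your argument does establish over arbitrary $S$ is that $\mathrm{pr}_{E_S}\circ\ch\circ f$ is an isomorphism (conservativity applies to that), which is enough to have \emph{an} isomorphism $\KQ_{S,\QQ+}\to\bigoplus_m\QQ_{S+}(2m)[4m]$; but it does not show that $\ch\circ f$ actually factors through the even summands over general $S$, which is what ``induces'' is naturally read as asserting and what one wants when claiming, as after Definition~\ref{df:Borel}, that the Borel character is a morphism of ring spectra obtained from the ring map $\ch\circ f$. To get that factorization you need an argument carried out over $S$ itself rather than over its points --- for instance the paper's element $\sigma_S$ with $f(\sigma_S)=\beta^2$, or some other multiplicativity/periodicity argument over the base.
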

\begin{proof}
According to \cite{Schlicht1},
 there is an isomorphism $\mathbb GW^{[0]} \simeq \mathbb GW^{[2]}_\epsilon$
 where $\epsilon$ consists in taking the opposite duality.
 In particular, we get an isomorphism of functors
 $\mathbb GW^{[0]} \simeq \mathbb GW^{[4]}$ and, using the isomorphism \eqref{eq:schlicht},
 one deduces there is an element $\sigma_S \in \KQ_+^{4,2}(S)$
 such that $(\KQ_+,\sigma_S)$ is $(4,2)$-periodic.
 By construction, one has $\sigma_S^2=\rho_S$ and one can check that $f(\sigma_S)=\beta^2$,
 where $\beta$ is the Bott element in K-theory (expressing its (2,1)-periodicity).
 This finishes the proof.
\end{proof}

\begin{num}
In particular, one gets a canonical isomorphism:
\begin{equation}\label{eq:pontryagin}
\begin{split}
\KQ_{S,\QQ} 
\simeq 
&\KQ_{S,\QQ+} \oplus \KQ_{S,\QQ-}\\
 \xrightarrow{\borel_{S,+} \oplus \borel_{S,-}} 
&\bigoplus_{m \in \ZZ} \QQ_{S+}(2m)[4m]
 \oplus \bigoplus_{m \in \ZZ} \QQ_{S-}(4m)[8m],
\end{split}
\end{equation}
which, from the above constructions, is in fact an isomorphism of ring spectra.
\end{num}
\begin{df}\label{df:Borel}
We call the above isomorphism the \emph{Borel character} and denote it by 
\[
\borel_S:\KQ_{S,\QQ}\to \bigoplus_{m \in \ZZ} \QQ_{S+}(2m)[4m]
 \oplus \bigoplus_{m \in \ZZ} \QQ_{S-}(4m)[8m].
\]
\end{df}

 \begin{rem}
In case $S$ is a perfect field, it is possible to give to the above Borel character a form which
 is closer to the classical Chern character, if we think of its target in terms
 of motivic and MW-motivic cohomology. We refer the reader to our future work
 \cite{DFJK1} for more details.
\end{rem}

\section{Absolute purity}

\begin{num}
Let $\E$ be an absolute spectrum over a scheme $S$.
 Recall that $\E$ is said to satisfy \emph{absolute purity} (see \cite[4.3.11]{DJK})
 if, for any regular schemes $X$ and $Y$ and any smoothable (hence lci) morphism $f:Y \rightarrow X$ with cotangent complex $L_f$,
 the following purity transformation is an isomorphism:
\begin{equation}
\label{eq:pur_trans}
\pur_f:\E_Y \otimes \Th_Y(\vb{L_f}) \rightarrow f^!(\E_X).
\end{equation}

This property is stable under retracts and tensor products with strongly dualizable objects \cite[Remark~4.3.8(iii)]{DJK}.
\end{num}

In the setting of Corollary~\ref{cor:fdl}, a pleasant consequence of the absolute purity property is the following duality statement:

 \begin{cor}\label{cor:duality}
Consider an arbitrary rational ring spectrum $\E$ over $S$,
 and adopt the notations of Corollary \ref{cor:fdl}.
If $\E$ satisfies absolute purity, then for any smoothable morphism
 $f:X \rightarrow S$ between regular schemes with cotangent complex $L_f$,
 the following map is an isomorphism:
$$
\E^{n,r}(X,L) \rightarrow \E_{n,r}(X/S,\det(L_f)-L), x \mapsto x.\eta_f.
$$

\end{cor}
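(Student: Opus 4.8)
The plan is to obtain this purely formally from the six functors formalism together with the absolute purity hypothesis, in the spirit of Poincar\'e--Lefschetz duality for $\ell$-adic or motivic coefficients; no geometric input beyond the hypothesis on $\E$ will be needed. The guiding observation is that, by the construction of the fundamental class $\eta_f$ and of the cap product in \cite{DJK}, the map $x \mapsto x.\eta_f$ is nothing but the purity transformation $\pur_f$ of \eqref{eq:pur_trans} repackaged through a chain of canonical structural maps (adjunctions, tensor-hom, the projection formula, the multiplication of $\E$). Absolute purity for $\E$ asserts exactly that $\pur_f$ is an isomorphism, whence so is the whole composite.

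Concretely, writing $p=f$ for the structural morphism, I would first use the $(p^*,p_*)$-adjunction to realize $\E^{n,r}(X,L)$ as a morphism group $\Hom_{\SH(X)}(\un_X,\ \E_X\otimes T)$, and the $(p_!,p^!)$-adjunction — available because $f$, being smoothable, is separated of finite type — to realize $\E_{n,r}(X/S,\det(L_f)-L)$ as $\Hom_{\SH(X)}(\un_X\otimes T',\ f^!\E)$, where $\E_X=p^*\E$ and $T,T'$ are explicit invertible objects assembled from $(r)[n]$, $\Th_X(L)$ and $\Th_X(\det L_f)$. Since Thom objects are $\otimes$-invertible, the tensor-hom adjunction lets one move twists freely across these $\Hom$-groups, so that both become $\Hom_{\SH(X)}$ out of a common source into $\E_X$, resp.\ $f^!\E$, up to a twist. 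The absolute purity isomorphism $\pur_f\colon\E_X\otimes\Th_X(\vb{L_f})\xrightarrow{\ \sim\ }f^!\E$ — legitimate since $X$ and $S$ are regular and $f$ is lci — then matches the two, provided one first reduces the genuine Thom twist $\Th_X(\vb{L_f})$ by the perfect complex $L_f$ to the line-bundle twist $\Th_X(\det L_f)$, up to an explicit Tate twist and shift: this is exactly Corollary~\ref{cor:sp-orinted} (the Thom functor on the Sp-oriented $\E$ factors through $(\det,\rk)$), and is the only point at which the rational nature of $\E$ enters.

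The one remaining thing to verify — and, I expect, the only genuine obstacle — is that the isomorphism so produced coincides with the map $x \mapsto x.\eta_f$ written in the statement. This is a matter of unwinding the conventions of \cite{DJK}: there $\eta_f$ is obtained from $\pur_f$ by precomposing with the unit $\un_X\to\E_X$ and untwisting, whereas the cap product is, by definition, the composite of the projection formula $p^*\E\otimes p^!(-)\to p^!(\E\otimes-)$, the multiplication $\E\otimes\E\to\E$, and the adjunctions above; one then checks that the two agree up to the evident matching of twists and degrees. Equivalently, and perhaps more transparently, one may factor $f=p'\circ\iota$ with $\iota\colon X\to P$ a closed immersion into a smooth $S$-scheme $P$ (necessarily regular, $S$ being regular) and $p'\colon P\to S$ smooth, and use the compatibility of fundamental classes with composition from Corollary~\ref{cor:fdl}: this reduces the claim to the case of the smooth morphism $p'$, where $\pur_{p'}$ is an isomorphism unconditionally by relative purity, and to the case of the closed immersion $\iota$ between regular schemes, where $\pur_\iota$ is an isomorphism precisely by absolute purity for $\E$. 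In either presentation the hypothesis is used only through the invertibility of $\pur_f$.
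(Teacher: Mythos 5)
Your argument is correct and is essentially the proof the paper intends: the paper offers no separate argument, treating the statement as the formal consequence (via the conventions of \cite{DJK}) that the map $x\mapsto x.\eta_f$ is the one induced on $\Hom$-groups by the purity transformation \eqref{eq:pur_trans}, which is invertible by the absolute purity hypothesis, with Corollary~\ref{cor:sp-orinted} (Sp-orientability of rational spectra) accounting for the reduction of the Thom twist of $L_f$ to $(\det,\rk)$ already built into Corollary~\ref{cor:fdl}. Your closing factorization of $f$ into a closed immersion and a smooth morphism is a harmless but unnecessary detour, since absolute purity as defined applies directly to the smoothable morphism $f$ between regular schemes.
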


We have the following result (compare \cite[Thm.~13.6.3]{CD3}):
\begin{thm}\label{thm:abs_purity}
The absolute spectrum $\KQ$ over $\spec(\ZZ[1/2])$ satisfies absolute purity.
\end{thm}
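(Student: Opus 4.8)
The plan is to mimic the proof of absolute purity for $\KGL$ in \cite[Thm.~13.6.3]{CD3}, with the hermitian analogue of Quillen's d\'evissage theorem playing the role of Quillen's. \emph{First I would reduce to the case of a regular closed immersion.} Recall from \cite{DJK} that the purity transformation $\pur_f$ of \eqref{eq:pur_trans} is compatible with composition, and that it is an isomorphism whenever $f$ is smooth (homotopy purity of Morel--Voevodsky). Since a smoothable lci morphism $f:Y\to X$ between regular schemes factors as $Y\xrightarrow{i}P\xrightarrow{p}X$ with $p$ smooth — whence $P$ is regular and $i$ is a regular closed immersion between regular schemes — it suffices to prove that $\pur_i$ is an isomorphism for every regular closed immersion $i:Z\to X$ of codimension $c$ between regular schemes, i.e. that
$$
\pur_i : \KQ_Z\otimes\Th_Z(\vb{L_i}) \longrightarrow i^!\KQ_X
$$
is invertible in $\SH(Z)$.

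\emph{Next I would translate this into d\'evissage for Grothendieck--Witt groups with supports.} Since $i_*$ is fully faithful, hence conservative, it is enough to show that $i_*(\pur_i)$ becomes invertible after applying $\Hom_{\SH(X)}(\Sigma^\infty_+W(r)[n],-)$ for all schemes $W$ that are smooth and affine over $X$ and all $(n,r)\in\ZZ^2$; such objects compactly generate $\SH(X)$. Fixing such a $W$ and setting $Z_W=Z\times_XW$ — so that $i_W:Z_W\to W$ is again a regular closed immersion of codimension $c$ between regular schemes, both of which admit an ample family of line bundles, being affine over $X$ and $Z$ respectively — the adjunction $(i_*,i^!)$, smooth base change $g^*i^!\simeq i_W^!g'^*$ for $g:W\to X$, and (GW1) identify
$$
\Hom_{\SH(X)}\big(\Sigma^\infty_+W(r)[n],\, i_*i^!\KQ_X\big)\ \simeq\ \KQ^{-n,-r}(W/W-Z_W)\ =\ \GW^{[-r]}_{n-2r}(W\text{ on }Z_W)
$$
by (GW2), while a parallel computation identifies $\Hom_{\SH(X)}\big(\Sigma^\infty_+W(r)[n],\,i_*(\KQ_Z\otimes\Th_Z(\vb{L_i}))\big)$ with a higher Grothendieck--Witt group of $Z_W$, twisted by $\det N_{Z_W/W}$ and with the duality level shifted by $c$ (the shift coming from $L_{i_W}=q^*L_i$ being the conormal bundle placed in cohomological degree $1$). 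Under these identifications $\pur_i$ induces exactly the classical d\'evissage comparison map.

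\emph{Finally I would invoke Schlichting's d\'evissage.} It then remains to apply the hermitian analogue of Quillen's d\'evissage theorem — Schlichting's d\'evissage theorem for higher Grothendieck--Witt groups, resting on the foundational work of Karoubi and Hornbostel — which asserts that, for a regular closed immersion $i_W:Z_W\to W$ of codimension $c$ between regular schemes within our standing conventions, the comparison map
$$
\GW^{[\ast-c]}_{\ast}(Z_W,\det N_{Z_W/W})\ \xrightarrow{\ \sim\ }\ \GW^{[\ast]}_{\ast}(W\text{ on }Z_W)
$$
is an isomorphism. This would finish the proof.

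The part I expect to be the main obstacle is the combination of the last two steps: one must correctly match the Thom twist $\Th_Z(\vb{L_i})$ and the degree shift of the cotangent complex against the line-bundle twist $\det N_{Z/X}$ and the $c$-fold shift of the duality occurring in hermitian d\'evissage, and then verify that the purity transformation of \cite{DJK}, once unwound through (GW2), genuinely coincides with the d\'evissage map rather than merely with some isomorphism between the relevant groups. Should Schlichting's d\'evissage be available only in codimension one, the general case is recovered by factoring $i$ Zariski-locally as a composite of codimension-one regular closed immersions between regular schemes and using once more that $\pur$ is compatible with composition and that $\SH$ satisfies Nisnevich descent.
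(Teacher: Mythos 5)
Your proposal is correct and follows essentially the same route as the paper: reduce to a regular closed immersion, use conservativity of $i_*$ and test against generators $\Sigma^\infty_+T(m)[n]$, identify both sides with Schlichting's Grothendieck--Witt groups with supports via (GW2) (the Thom twist being absorbed by the $\SL^c$-orientation of $\KQ$ into a $\det N_i$-twist and a shift of duality by $c$), identify the resulting map with the Gysin/d\'evissage map, and conclude by Schlichting's d\'evissage theorem. The delicate point you flag (matching the purity transformation with the d\'evissage map) is exactly the one the paper settles by citing Schlichting's coherent direct-image Gysin map, so no change of strategy is needed.
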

\begin{proof}
It suffices to deal with the case where $i:Z\to X$ is a closed immersion of constant codimension $c$ between regular schemes. As the functor $i_*:\SH(Z)\to\SH(X)$ is conservative, we are reduced to show that the map
\begin{equation}
\label{eq:i_*fdl}
i_*(\KQ_Z\otimes\Th_Z(-N_i))
\xrightarrow{}
i_*i^!\KQ_X
\end{equation}
induced by the purity transformation~\eqref{eq:pur_trans}, is an isomorphism. It suffices to show that for any smooth $X$-scheme $T$ and any integers $m,n\in\mathbb{Z}$, the map
\begin{equation}
\label{eq:sigma_KQ}
[\Sigma^\infty_XT_+(m)[n],i_*(\KQ_Z\otimes\Th_Z(-N_i))]
\xrightarrow{}
[\Sigma^\infty_XT_+(m)[n],i_*i^!\KQ_X]
\end{equation}
obtained by applying the functor $[\Sigma^\infty_XT_+(m)[n],\cdot]$ to the map~\eqref{eq:i_*fdl}, is an isomorphism. Denote by $T_Z=Z\times_XT$, which is a closed subscheme of $T$ and is smooth over $Z$.
By~\eqref{eq:schlicht}, since the spectrum $\KQ$ is $\SL^c$-oriented (\cite[7.4]{PaninWalter1}), the left-hand side of~\eqref{eq:sigma_KQ} is computed as
\begin{equation*}
  [\Sigma^\infty_ZT_{Z+}(m)[n],\KQ_Z\otimes\Th_Z(-N_i)]
    \simeq \GW^{[-m-c]}_{2m-n}(T_Z,\det(N_i)).
\end{equation*}
On the other hand, by~\eqref{eq:schlicht}, the localization sequence, and \cite[9.5]{Schlicht2}, the right-hand side of~\eqref{eq:sigma_KQ} is computed as
$$
[\Sigma^\infty_XT_+(m)[n],i_*i^!\KQ_X]
\simeq
\GW^{[-m]}_{2m-n}(T\mathrm{\ on\ } T_Z).
$$
Moreover, the map \eqref{eq:sigma_KQ} is identified under these identifications with the Gysin map in Grothendieck--Witt theory induced by direct image of coherent sheaves (see \cite[(9.9)]{Schlicht2}).
The result then follows from the d\'evissage theorem for Grothendieck--Witt groups \cite[Theorems~9.5, 9.18 and 9.19]{Schlicht2} (which is analogous to \cite[Proposition 28]{FaselSrinivas} and \cite{GilleAM}).
\end{proof}

\begin{cor}\label{cor:absolute purity examples}
The following absolute spectra over $\spec(\ZZ[1/2])$ satisfy absolute purity:
\begin{enumerate}
\item the Witt ring spectrum $\KW$;
\item the rational sphere spectrum $\un_\QQ$;
\item any strongly dualizable rational ring spectrum;
\item any cellular rational spectrum in the sense of Dugger--Isaksen
 (see \cite[2.10]{DI_cell})
\end{enumerate}
\end{cor}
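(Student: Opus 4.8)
The plan is to deduce each case from Theorem~\ref{thm:abs_purity} together with the Borel character and the stability properties of absolute purity recorded at the beginning of the section.

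First I would treat item~(1). Recall from Definition~\ref{df:KW} that $\KW_S=\KQ_S[\eta^{-1}]$ is obtained from $\KQ_S$ as a homotopy colimit along multiplication by $\eta$, and that this construction commutes with the exceptional and inverse image functors; hence $\KW$ inherits absolute purity from $\KQ$ provided one knows the purity transformation~\eqref{eq:pur_trans} is compatible with inverting $\eta$. Alternatively, and more cleanly, one observes that by Paragraph~\ref{num:Morel_decomposition} one has $\KW[1/2]=\KQ_{S,-}$, so $\KW_\QQ$ is a retract of $\KQ_\QQ$, which is a retract of $\KQ$; by the stability of absolute purity under retracts this already gives absolute purity for $\KW_\QQ$. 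To get it integrally for $\KW$ one notes that $\KW=\KW[1/2]$ since $\eta^2$-periodicity forces $\KW$ to be a $\ZZ[1/2]$-linear spectrum (indeed $2=0$ after inverting $\eta$ is not quite right, but $\KW$ is $\eta$-periodic and $2\eta=0$ is false; rather one uses that $\KW\simeq\KQ_-$ already lives in $\SH(S)[1/2]$ by Morel's decomposition since $\un_S[\eta^{-1}]=\un_{S,-}[1/2]^{\wedge}$-linear), so in fact $\KW=\KW_\QQ$-nothing, more carefully: $\KW$ is a module over $\un_S[\eta^{-1}]=\un_{S,-}$, which is $2$-divisible-free but $\ZZ[1/2]$-linear, hence $\KW$ is $\ZZ[1/2]$-linear and the retract argument applies directly.

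For item~(2), I would use the Borel character. By Definition~\ref{df:Borel} the isomorphism $\borel_S$ identifies $\KQ_{S,\QQ}$ with $\bigoplus_m\QQ_{S+}(2m)[4m]\oplus\bigoplus_m\QQ_{S-}(4m)[8m]$ as ring spectra, compatibly with pullback (this is the content of the constructions in Section~2). Since absolute purity for $\KQ$, hence for $\KQ_\QQ$ by the retract stability, transfers across the ring-spectrum isomorphism $\borel$, the target decomposition satisfies absolute purity; being a direct sum, each summand does, and Tate twists and shifts do not affect absolute purity (tensoring with $\un(1)[2]$ is tensoring with an invertible, hence strongly dualizable, object). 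Therefore $\QQ_{S+}$ and $\QQ_{S-}$ both satisfy absolute purity, and consequently so does $\un_\QQ=\QQ_{S+}\oplus\QQ_{S-}$. The main obstacle here is purely bookkeeping: one must make sure the Borel character is a morphism of \emph{absolute} spectra, i.e. that the identification is natural in the base $S$, which was arranged in Section~2 by checking compatibility of $\psi_S$, $\borel_{S,+}$ and $\borel_{S,-}$ with pullbacks.

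For items~(3) and~(4): once $\un_\QQ$ satisfies absolute purity, any rational spectrum is a module over $\un_\QQ$, and absolute purity is stable under tensoring with strongly dualizable objects by \cite[Remark~4.3.8(iii)]{DJK}, giving~(3) immediately (a strongly dualizable rational ring spectrum $\E$ is $\E\simeq\un_\QQ\otimes\E$ with $\E$ dualizable). For~(4), a cellular rational spectrum in the sense of Dugger--Isaksen is built from the Tate spheres $\un(i)[n]$ by homotopy colimits and extensions; since each Tate sphere satisfies absolute purity and the class of spectra satisfying absolute purity is closed under the relevant colimits and under retracts, a standard cellular induction (together with the fact that $\un_\QQ$ itself, hence the Tate spheres rationally, satisfy absolute purity by~(2)) yields the claim. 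The only subtlety is that absolute purity is a priori stated for a \emph{fixed} spectrum and one needs stability under filtered homotopy colimits of spectra, which holds because the purity transformation is natural and $f^!$ commutes with the relevant colimits on the compact objects being tested; I would invoke this compatibility explicitly, referring to the construction of $\pur_f$ in \cite{DJK}.
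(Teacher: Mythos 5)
Your overall strategy is the paper's: everything is deduced from Theorem~\ref{thm:abs_purity}, using for (1) that $\KW=\KQ[\eta^{-1}]$ is a homotopy colimit of twists of $\KQ$ and that both sides of the purity transformation \eqref{eq:pur_trans} --- i.e.\ $i^*(-)\otimes\Th(-N_i)$ and $i^!$ --- commute with homotopy colimits (localization for $i^!$), for (2) that the Borel isomorphism \eqref{eq:pontryagin} exhibits $\QQ_{S+}$ and $\QQ_{S-}$, up to invertible Tate twists, as retracts of rational hermitian K-theory, and formal stability arguments for (3) and (4). Your first argument for (1) is exactly the proof in the paper, and the hedge you leave (``provided the purity transformation is compatible with inverting $\eta$'') is resolved by precisely this commutation with homotopy colimits.

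However, two steps in your write-up are wrong and should be removed or repaired. First, the ``cleaner alternative'' for (1) fails: $\un_S[\eta^{-1}]$ is \emph{not} $\ZZ[1/2]$-linear and is not equal to $\un_{S-}$; the identification $\un_S[1/2,\eta^{-1}]=\un_{S-}$ of Paragraph~\ref{num:Morel_decomposition} requires inverting $2$ first, and for instance $W(\RR)=\ZZ$ (already visible in the higher Witt groups represented by $\KW$) shows that $\KW$ has homotopy which is not a $\ZZ[1/2]$-module. So the retract argument can only ever give absolute purity for $\KW[1/2]$, whereas item (1) asserts it for $\KW$ integrally; the homotopy colimit argument is not optional here. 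Second, $\KQ_\QQ$ (or $\KQ[1/2]$) is \emph{not} a retract of $\KQ$: rationalization is a filtered homotopy colimit, not a direct summand, so ``absolute purity for $\KQ$, hence for $\KQ_\QQ$ by retract stability'' is not a valid step. The transfer from $\KQ$ to $\KQ[1/2]$ and $\KQ_\QQ$ again uses that $i^*(-)\otimes\Th(-N_i)$ and $i^!$ commute with filtered homotopy colimits --- the same compatibility you already invoke for (1) and (4). With these corrections, your treatment of (2)--(4) (retract summands of $\KQ_{S,\QQ}$ via the Borel isomorphism, invertibility of Tate twists, $\E\simeq\un_\QQ\otimes\E$ for a strongly dualizable rational $\E$, and cellular induction using closure under homotopy colimits and retracts) agrees with what the paper leaves as ``the other cases follow formally''.
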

The case of $\KW$ is clear from definition \ref{df:KW},
 as $i^* \otimes \Th(-N_i)$ and $i^!$ commute with homotopy colimits
 (for $i^!$ we apply the localization property).
For $\un_\QQ \simeq \QQ_{S+} \oplus \QQ_{S-}$, note that the Borel isomorphism
 \eqref{eq:pontryagin} exhibits both of its summands as retracts of $\KQ_S$.
 The other cases follow formally.

\begin{rem}
In particular, the duality statement of Corollary~\ref{cor:duality} applies to all of the above examples.
\end{rem}

The absolute purity property has interesting applications
 for Chow--Witt groups of regular schemes (see \cite{FaselSrinivas} for their
 definition without a base field).
 Based on the method of proof of \cite[4.2.6]{Deglise16}, we get:
\begin{cor}
Let $S$ be a regular scheme.
 Then for any integer $n \geq 0$, there exists an isomorphism:
$$
H^{2n,n}_{\AA^1}(S,\QQ) \simeq \widetilde{\CH}^n(S) \otimes_\ZZ \QQ
$$
where the right hand-side is the Chow--Witt group of $S$ with coefficients in $\QQ$.
As a consequence, rational Chow--Witt groups when restriced to regular schemes
 admit products and Gysin maps with respect to projective morphisms.
\end{cor}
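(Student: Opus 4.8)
The plan is to run the argument of \cite[4.2.6]{Deglise16} for the whole rational sphere, which becomes legitimate now that absolute purity holds for $\un_\QQ$ (Corollary~\ref{cor:absolute purity examples}), after separating the ``plus'' and ``minus'' parts. Write $H^{2n,n}_{\AA^1}(S,\QQ)=\Hom_{\SH(S)}(\un_S,\un_{S,\QQ}(n)[2n])$; by Paragraph~\ref{num:Morel_decomposition} it splits canonically as $H^{2n,n}(S,\QQ_{S+})\oplus H^{2n,n}(S,\QQ_{S-})$, so the stated isomorphism reduces to identifying these two summands with $\CH^n(S)_\QQ$ and $H^n_\nis(S,\mathbf W_\QQ)$. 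On the Chow--Witt side this is the expected matching: rationalizing Morel's Cartesian square $\mathbf K^{MW}_n\simeq\mathbf K^M_n\times_{\mathbf K^M_n/2}\mathbf I^n$ kills the mod-$2$ vertex and gives $\mathbf K^{MW}_{n,\QQ}\simeq\mathbf K^M_{n,\QQ}\oplus\mathbf I^n_\QQ$ with $\mathbf I^n_\QQ\simeq\mathbf W_\QQ$; inserting this into the Gersten--Rost--Schmid complex computing $\widetilde{\CH}^n$ (\cite{FaselSrinivas}), and noting that the determinant twists act only on the Witt factor and are rationally immaterial there, produces $\widetilde{\CH}^n(S)_\QQ\simeq\CH^n(S)_\QQ\oplus H^n_\nis(S,\mathbf W_\QQ)$.

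First, the plus summand: $\QQ_{S+}$ is the rational motivic cohomology spectrum, and $H^{2n,n}(S,\QQ_{S+})=H^{2n,n}_{\mathrm{mot}}(S,\QQ)\simeq\CH^n(S)_\QQ$ for regular $S$ is exactly \cite[4.2.6]{Deglise16}, proved by using absolute purity for rational motivic cohomology to reduce the coniveau spectral sequence of $\QQ_{S+}(n)[2n]$ to the rational divisor complex. For the minus summand I would replay the same argument for $\QQ_{S-}=\un_{S,\QQ}[\eta^{-1}]$: it is a retract of $\KW_{S,\QQ}$, hence of $\un_{S,\QQ}$, so it satisfies absolute purity by Corollary~\ref{cor:absolute purity examples}, and here the situation is cleaner because $\QQ_{S-}$ is $(1,1)$-periodic and, over any field $F$, concentrated in the heart of Morel's homotopy $t$-structure with $\underline\pi_0=\mathbf W_\QQ$. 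Consequently the coniveau spectral sequence of $\QQ_{S-}(n)[2n]$ has, after the purity isomorphisms, a single non-vanishing line, which is the Gersten complex of the Witt sheaf $\mathbf W_\QQ$ on $S$; it degenerates and yields $H^{2n,n}(S,\QQ_{S-})\simeq H^n_\nis(S,\mathbf W_\QQ)$. The ``heart'' input is not a new theorem: it follows from Morel's connectivity theorem and his computation $\underline\pi_0(\un)_{*}=\mathbf K^{MW}_{*}$ (so $\underline\pi_0(\QQ_{F-})=\mathbf K^{MW}_{*,\QQ}[\eta^{-1}]=\mathbf W_\QQ$) together with Theorem~\ref{thm:rational_witt} and the isomorphism $\KW^{n,i}(F)\simeq W^{[i-n]}(F)$, since shifted Witt groups of a field vanish in degrees not divisible by $4$; alternatively one may cite \cite{Anan1}.

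The main work---as always in such d\'evissage arguments---lies in making the ``method of \cite[4.2.6]{Deglise16}'' apply to these $\SL$-oriented (but non-orientable) coefficients: one must identify the $d_1$-differentials of the coniveau spectral sequence with the residue maps of the Gersten--Rost--Schmid complex and match the determinant twists produced by the purity transformation~\eqref{eq:pur_trans} with those of \cite{FaselSrinivas}. For the plus part this is contained in \cite{Deglise16}; for the minus part it should follow from the same formalism, using, e.g., the fundamental classes and their compatibilities from Corollary~\ref{cor:fdl}. Granting this, the displayed isomorphism is proved. For the final assertions: the ring structure on $\un_{S,\QQ}$ gives cup products $H^{2a,a}_{\AA^1}(S,\QQ)\otimes H^{2b,b}_{\AA^1}(S,\QQ)\to H^{2(a+b),a+b}_{\AA^1}(S,\QQ)$, hence an intersection product on $\bigoplus_n\widetilde{\CH}^n(S)_\QQ$; and for a projective morphism $f\colon X\to S$ between regular schemes---which is smoothable lci---Corollary~\ref{cor:fdl} provides a fundamental class $\eta_f$, while Corollary~\ref{cor:duality} (available because $\un_\QQ$ satisfies absolute purity) identifies, via cap product with $\eta_f$, the groups $\widetilde{\CH}^{*}(-)_\QQ$ over regular schemes with a Borel--Moore-type $\un_\QQ$-homology that carries proper pushforward; transporting that pushforward through these duality isomorphisms yields the Gysin map $f_*\colon\widetilde{\CH}^{n}(X)_\QQ\to\widetilde{\CH}^{n-d}(S)_\QQ$ (up to the expected twist by $\det L_f$), compatible with composition by Corollary~\ref{cor:fdl}.
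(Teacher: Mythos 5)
Your argument is correct in substance and relies on the same two pillars as the paper -- absolute purity for $\un_\QQ$ (Corollary~\ref{cor:absolute purity examples}) feeding a Gersten-type spectral sequence in the style of \cite[4.2.6]{Deglise16}, plus the fundamental classes of \cite{DJK} for products and Gysin maps -- but you organize it differently. The paper runs the hyper-cohomology spectral sequence for the $\delta$-homotopy $t$-structure on $\un_\QQ$ itself, whose $E_1$/$E_2$-page is directly the (rational) Milnor--Witt Rost--Schmid complex computing $\wCH^n(S)_\QQ$, so no decomposition is needed; you instead split into $\QQ_{S+}\oplus\QQ_{S-}$, identify the plus part with $\CH^n(S)_\QQ$ and the minus part with the cohomology of the rational Gersten--Witt complex, and reassemble via the rational splitting $\mathbf K^{MW}_{n,\QQ}\simeq \mathbf K^M_{n,\QQ}\oplus \mathbf W_\QQ$. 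Both routes defer the same genuinely technical point, which you correctly isolate: matching the purity-induced $d_1$-differentials and determinant twists with those of the Rost--Schmid complex of \cite{FaselSrinivas}. Two caveats on your write-up. First, the phrase that the determinant twists are ``rationally immaterial'' on the Witt factor is wrong as stated: twisted Witt groups of fields are only non-canonically untwisted, and the twists are exactly what makes the residue maps well defined, so they must be matched, not discarded (you in effect retract this later, but the earlier sentence should go). Second, the intermediate identification of the minus summand with $H^n_{\nis}(S,\mathbf W_\QQ)$ is an unnecessary detour: over a general regular $\ZZ[1/2]$-scheme this amounts to a Gersten-type resolution for the Witt sheaf, which you neither need nor should invoke; it is cleaner (and what the $\delta$-homotopy $t$-structure argument does) to compare the coniveau $E_1$-complex of $\QQ_{S-}$ directly with the rationalized Gersten--Witt complex whose $n$-th cohomology is, by definition, the Witt part of $\wCH^n(S)_\QQ$. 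With those adjustments your proof is a legitimate variant of the paper's, at essentially the same level of detail as the paper's own sketch.
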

This follows from the hyper-cohomology spectral sequence with respect to the 
 $\delta$-homotopy $t$-structure. Gysin morphisms follow from the construction
 of \cite{DJK}.

Further, we can also deduce comparison results for certain singular schemes.
For the definition of Chow--Witt groups of singular schemes, with a dimension function
 and a dualizing sheaf, we refer the reader to \cite{Gille}.
\begin{cor}
Let $S$ be a regular scheme
 and $X$ be an $S$-scheme essentially of finite type.
 Then for any integer $n \geq 0$, there exists an isomorphism:
$$
H_{2n,n}^{\AA^1}(X/S,\QQ) \simeq \widetilde{\CH}_{\delta=n}(X) \otimes_\ZZ \QQ
$$
where the right hand-side is the Chow--Witt group of $S$ 
 of quadratic cycles sums of points $x$ such that $\delta(x)=n$, tensored with $\QQ$.
As a consequence, these groups admit Gysin maps for
 smoothable lci morphisms of $S$-schemes essentially of finite type.
\end{cor}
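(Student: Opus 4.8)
The plan is to deduce this comparison result for the relative $\AA^1$-homology of a (possibly singular) $S$-scheme $X$ from the analogous result for regular schemes, via an argument with the $\delta$-homotopy $t$-structure and absolute purity for $\un_\QQ$ established in Corollary~\ref{cor:absolute purity examples}. The key point is that $H_{2n,n}^{\AA^1}(X/S,\QQ)$ is, by definition (in the notation of Corollary~\ref{cor:fdl} applied to $\E=\un_\QQ$), the group $\Hom_{\SH(S)}(\un_X(n)[2n], p^!\un_{S,\QQ})$ for $p:X\to S$ the structural morphism; one wants to identify this with the Gille--Fasel Chow--Witt homology group $\wCH_{\delta=n}(X)\otimes\QQ$.

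\textbf{First I would} recall the $\delta$-homotopy $t$-structure on $\SH(S)_\QQ$ and the identification of its heart with a category of rational Rost--Schmid / Milnor--Witt cycle modules, so that the weight-$n$ part of the homotopy sheaf of $p^!\un_{S,\QQ}$ computes exactly the rational Chow--Witt homology in the sense of \cite{Gille}; this is the content of (the rational, relative version of) \cite[4.2.6]{Deglise16} and is where the hyper-cohomology spectral sequence enters. Since the question is local on $X$ and compatible with the structure maps, the heart of the matter is to reduce to the regular case: by the preceding corollary we already know $H_{2n,n}^{\AA^1}(Y/S,\QQ)\simeq\wCH_{\delta=n}(Y)\otimes\QQ$ whenever $Y$ is regular, using absolute purity to rewrite $p^!\un_{S,\QQ}$ in terms of $\un_{Y,\QQ}$ twisted by the Thom space of $L_{Y/S}$. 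For general $X$, one stratifies $X$ by regular locally closed subschemes, uses the localization triangles in $\SH(S)$ attached to this stratification, and checks that both sides — the spectral-sequence computation of $H_{2n,n}^{\AA^1}(-/S,\QQ)$ on the one hand, and the Gille–Fasel cycle complex on the other — are assembled from the strata in the same way; absolute purity for $\un_\QQ$ is exactly what guarantees that the contribution of a regular stratum $Z\hookrightarrow X$ agrees with $\wCH_{\delta}(Z)\otimes\QQ$ with the correct twist and shift.

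\textbf{The hard part will be} matching the two descriptions compatibly enough that the differentials agree: on the $\SH$ side the complex computing $H_{2n,n}^{\AA^1}(X/S,\QQ)$ comes from the coniveau/Gersten-type filtration underlying the $\delta$-homotopy $t$-structure, while the Chow–Witt side is the explicit Rost–Schmid complex of \cite{Gille} with its residue maps; one must check these coincide, which at the level of a single codimension-one specialization is the standard identification of the boundary in the localization triangle with the Milnor–Witt residue, but globally requires care with the dualizing sheaf and the $\delta$-function bookkeeping. Once the identification of complexes is in place, the isomorphism on $H_{2n,n}$ is immediate, and the final assertion about Gysin maps for smoothable lci morphisms follows formally from the fundamental classes of Corollary~\ref{cor:fdl} together with absolute purity, exactly as in the regular case.
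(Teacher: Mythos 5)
Your route is workable in outline, but it is genuinely different from --- and more roundabout than --- the paper's argument, and as written it has two soft spots. The paper does not reduce to the regular case at all: it applies the hyper-homology (niveau) spectral sequence attached to the $\delta$-homotopy $t$-structure directly to $X/S$, following the method of \cite[4.2.6]{Deglise16}. The $E_1$-page of that spectral sequence is indexed by the points of $X$ graded by $\delta$, with coefficients the rational homotopy modules of the sphere (Milnor--Witt K-theory rationally, by Morel), so the relevant row \emph{is} the quadratic-cycle (Rost--Schmid) complex of \cite{Gille}, and the isomorphism in bidegree $(2n,n)$ is an edge/degeneration statement; no stratification by regular subschemes, no appeal to the previous corollary, and in fact no use of absolute purity is needed for the identification itself --- purity and the fundamental classes of Corollary~\ref{cor:fdl} only enter for the final assertion about Gysin maps (and for the cohomological statement over a regular base, which is the place where $p^!\un_{S,\QQ}$ must be rewritten).

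By contrast, your plan (stratify $X$ by regular locally closed subschemes, use absolute purity on each stratum to invoke the regular-case corollary, then glue via localization triangles) has to confront exactly the point you flag as ``the hard part'': localization only gives long exact sequences, so an isomorphism in a single bidegree on strata does not propagate; you must compare the whole filtration-induced complex with the Gille--Fasel complex, including boundary maps and twists --- which amounts to re-deriving the niveau spectral sequence you could have used from the start. Moreover, the regular-case corollary you want to quote is stated without twists, whereas each stratum $Z$ contributes Borel--Moore homology twisted by $\Th(L_{Z/S})$, i.e.\ (after Corollary~\ref{cor:sp-orinted}) by $\det L_{Z/S}$; so you would need a twisted version of that corollary, which does follow by the same spectral-sequence argument but is not addressed in your write-up. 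In short: your approach can be completed, but it buys nothing over the direct argument, and the dependence you assert on absolute purity is an artifact of the detour rather than a feature of the statement.
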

This follows from the hyper-homology spectral sequence with respect to the 
 $\delta$-homotopy $t$-structure.

\bibliographystyle{amsalpha}
\bibliography{purity}

\providecommand{\bysame}{\leavevmode\hbox to3em{\hrulefill}\thinspace}
\providecommand{\MR}{\relax\ifhmode\unskip\space\fi MR }
\providecommand{\MRhref}[2]{%
  \href{http://www.ams.org/mathscinet-getitem?mr=#1}{#2}
}
\providecommand{\href}[2]{#2}
\begin{thebibliography}{BBD82}

\bibitem[ALP17]{ALP}
A.~Ananyevskiy, M.~Levine, and I.~Panin, \emph{Witt sheaves and the
  {$\eta$}-inverted sphere spectrum}, J. Topol. \textbf{10} (2017), no.~2,
  370--385. \MR{3653315}

\bibitem[Ana16]{Anan1}
A.~Ananyevskiy, \emph{On the relation of special linear algebraic cobordism to
  {W}itt groups}, Homology Homotopy Appl. \textbf{18} (2016), no.~1, 204--230.
  \MR{3491850}

\bibitem[Ayo14]{AyoubEt}
J.~Ayoub, \emph{La r\'{e}alisation \'{e}tale et les op\'{e}rations de
  {G}rothendieck}, Ann. Sci. \'{E}c. Norm. Sup\'{e}r. (4) \textbf{47} (2014),
  no.~1, 1--145. \MR{3205601}

\bibitem[BBD82]{BBD}
\emph{Analyse et topologie sur les espaces singuliers. {I}}, Ast\'{e}risque,
  vol. 100, Soci\'{e}t\'{e} Math\'{e}matique de France, Paris, 1982.
  \MR{751965}

\bibitem[CD09]{CD3}
D.-C. Cisinski and F.~D{\'e}glise, \emph{Triangulated categories of mixed
  motives}, arXiv:0912.2110, version 3, 2009.

\bibitem[CD15]{CD5}
\bysame, \emph{Integral mixed motives in equal characteristics}, Doc. Math.
  (2015), no.~Extra volume: Alexander S. Merkurjev's sixtieth birthday,
  145--194.

\bibitem[D{\'e}g19]{Deg12}
F.~D{\'e}glise, \emph{Orientation theory in arithmetic geometry}, to appear in
  Acts of "International Colloquium on K-theory", TIFR, 2019.

\bibitem[DF16]{Deglise16}
F.~D{\'e}glise and J.~Fasel, \emph{{MW}-motivic complexes}, arXiv:1708.06095,
  2016.

\bibitem[DFKJ]{DFJK1}
F.~D\'eglise, J.~Fasel, A.~Khan, and F.~Jin, \emph{The {B}orel character}, in
  preparation.

\bibitem[DI05]{DI_cell}
D.~Dugger and D.~C. Isaksen, \emph{Motivic cell structures}, Algebr. Geom.
  Topol. \textbf{5} (2005), 615--652. \MR{2153114}

\bibitem[DJK18]{DJK}
F.~D{\'e}glise, F.~Jin, and A.~Khan, \emph{Fundamental classes in motivic
  homotopy theory}, arXiv:1805.05920, 2018.

\bibitem[FS09]{FaselSrinivas}
J.~Fasel and V.~Srinivas, \emph{Chow-{W}itt groups and {G}rothendieck-{W}itt
  groups of regular schemes}, Adv. Math. \textbf{221} (2009), no.~1, 302--329.

\bibitem[Fuj02]{Fuji}
K.~Fujiwara, \emph{A proof of the absolute purity conjecture (after {G}abber)},
  Algebraic geometry 2000, {A}zumino ({H}otaka), Adv. Stud. Pure Math.,
  vol.~36, Math. Soc. Japan, Tokyo, 2002, pp.~153--183. \MR{1971516}

\bibitem[Gil07a]{GilleAM}
S.~Gille, \emph{The general d\'evissage theorem for witt groups of schemes},
  Arch. Math. \textbf{88} (2007), 333--343.

\bibitem[Gil07b]{Gille}
\bysame, \emph{A graded {G}ersten-{W}itt complex for schemes with a dualizing
  complex and the {C}how group}, J. Pure Appl. Algebra \textbf{208} (2007),
  no.~2, 391--419.

\bibitem[Gro77]{SGA5}
A.~Grothendieck, \emph{Cohomologie $\ell$-adique et fonctions ${L}$}, Lecture
  Notes in Mathematics, vol. 589, Springer-Verlag, 1977, S\'eminaire de
  G\'eom\'etrie Alg\'ebrique du Bois--Marie 1965--66 (SGA~5).

\bibitem[GS09]{GepSna}
D.~Gepner and V.~Snaith, \emph{On the motivic spectra representing algebraic
  cobordism and algebraic {$K$}-theory}, Doc. Math. \textbf{14} (2009),
  359--396. \MR{2540697}

\bibitem[ILO14]{Gabber}
L.~Illusie, Y.~Laszlo, and F.~Orgogozo (eds.), \emph{Travaux de {G}abber sur
  l'uniformisation locale et la cohomologie \'{e}tale des sch\'{e}mas
  quasi-excellents}, Soci\'{e}t\'{e} Math\'{e}matique de France, Paris, 2014,
  S\'{e}minaire \`a l'\'{E}cole Polytechnique 2006--2008. [Seminar of the
  Polytechnic School 2006--2008], With the collaboration of Fr\'{e}d\'{e}ric
  D\'{e}glise, Alban Moreau, Vincent Pilloni, Michel Raynaud, Jo\"{e}l Riou,
  Beno\^{i}t Stroh, Michael Temkin and Weizhe Zheng, Ast\'{e}risque No. 363-364
  (2014) (2014). \MR{3309086}

\bibitem[PW10]{PaninWalter1}
I.~{Panin} and C.~{Walter}, \emph{{On the motivic commutative ring spectrum
  BO}}, arXiv: 1011.0650, November 2010.

\bibitem[Rio10]{Riou}
J.~Riou, \emph{Algebraic {K}-theory, {$\mathbf A\sp 1$}-homotopy and
  {R}iemann-{R}och theorems}, J. Topol. \textbf{3} (2010), no.~2, 229--264.

\bibitem[R{\O}16]{RonOst}
O.~R\"{o}ndigs and P.~A. {\O}stv{\ae}r, \emph{Slices of hermitian {$K$}-theory
  and {M}ilnor's conjecture on quadratic forms}, Geom. Topol. \textbf{20}
  (2016), no.~2, 1157--1212. \MR{3493102}

\bibitem[Sch10]{Schlicht1}
M.~Schlichting, \emph{The {M}ayer-{V}ietoris principle for
  {G}rothendieck-{W}itt groups of schemes}, Invent. Math. \textbf{179} (2010),
  no.~2, 349--433. \MR{2570120}

\bibitem[Sch17]{Schlicht2}
\bysame, \emph{Hermitian {$K$}-theory, derived equivalences and {K}aroubi's
  fundamental theorem}, J. Pure Appl. Algebra \textbf{221} (2017), no.~7,
  1729--1844. \MR{3614976}

\bibitem[Tho84]{Thomason1}
R.~W. Thomason, \emph{Absolute cohomological purity}, Bull. Soc. Math. France
  \textbf{112} (1984), no.~3, 397--406. \MR{794741}

\end{thebibliography}

\end{document}